\documentclass[12pt]{amsart}

\usepackage{lmodern,amsmath,amssymb,mathtools,microtype}
\usepackage[a4paper,margin=23mm]{geometry}
\usepackage{xurl}
\usepackage[hidelinks]{hyperref}
\newtheorem{theorem}{Theorem}
\newtheorem{lemma}[theorem]{Lemma}
\newtheorem{proposition}[theorem]{Proposition}
\newtheorem{corollary}[theorem]{Corollary}
\newtheorem{hypothesis}[theorem]{External input}
\theoremstyle{remark}\newtheorem{remark}[theorem]{Remark}
\numberwithin{equation}{section}
\newcommand{\R}{\mathbb R}
\newcommand{\Z}{\mathbb Z}
\newcommand{\supp}{\operatorname{supp}}
\newcommand{\1}{\mathbf 1}
\newcommand{\ip}[1]{\langle #1\rangle}

\title[The square function estimate for the cone]{The square function estimate for the cone in four dimensions}
\author{Faruk Temur}
\date{October 8, 2026}
\subjclass[2020]{Primary 42B10; Secondary 42B25, 42B37}
\keywords{Reverse square function, cone, elliptic surfaces, local smoothing, wave packets}
\hypersetup{pdftitle={A square function estimate for the cone in four dimensions},
 pdfauthor={Faruk Temur},pdfsubject={A conditional cone estimate and its elliptic consequences}}
\begin{document}
\begin{abstract}
We obtain the $L^3$ reverse square function estimate for the light cone in four dimensions from the critical local-smoothing and round-cone propagation statements in the September 2026 OpenAI manuscript. We state these inputs, together with the plate maximal theorem of Di Plinio and Parissis, in a preliminaries section. The proof passes from geometric concentration of packet coefficients to regularity of the analyzed wave, and then from free waves to arbitrary functions supported near the cone. As a consequence, we obtain the corresponding $L^3$ estimate for the sphere and elliptic paraboloids in three dimensions. 
\end{abstract}
\maketitle

\section{Introduction}
Let
\[
 \Gamma=\{(\xi,\tau)\in\R^3\times\R:\tau=|\xi|,\quad1\leq|\xi|\leq2\}.
\]
For $R\geq2$, put $h=R^{-1}$. Choose a maximal $\sqrt h$-separated set
$\{\omega_\theta\}\subset S^2$. For a fixed constant $C_0\geq2$, define the sectors
\begin{equation}\label{sectorintro}
 \theta=\left\{(\xi,\tau):1\leq|\xi|\leq2,\quad
 \left|\frac\xi{|\xi|}-\omega_\theta\right|\leq C_0\sqrt h,
 \quad|\tau-|\xi||\leq C_0h\right\}.
\end{equation}
Each sector is contained in a rectangular frequency plank with side lengths
$1,\sqrt h,\sqrt h,h$. The family has bounded overlap. We give the construction, including the corresponding smooth partition, in Section~\ref{geometry}.

\begin{theorem}[Cone square function estimate]\label{main}
Assume the external local-smoothing and round-cone inputs stated in
Section~\ref{preliminaries}. For every $\varepsilon>0$ there is a constant
$C_{\varepsilon,C_0}$ such that, for every $R\geq2$ and every family of
Schwartz functions $f_\theta:\R^4\to\mathbb C$ satisfying
$\supp\widehat f_\theta\subset\theta$, with $\theta$ as in
\eqref{sectorintro},
\begin{equation}\label{target}
 \left\|\sum_\theta f_\theta\right\|_{L^3(\R^4)}
 \leq C_{\varepsilon,C_0}R^\varepsilon
 \left\|\left(\sum_\theta|f_\theta|^2\right)^{1/2}\right\|_{L^3(\R^4)}.
\end{equation}
The constant is independent of $R$, the choice of the separated angular net,
and the functions. Fixed changes of the radial interval and fixed enlargements
of the canonical planks are allowed, with a corresponding change of constant.
\end{theorem}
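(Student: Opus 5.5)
The plan is to combine the two external inputs through a wave packet decomposition, in the spirit of the Guth--Wang--Zhang proof of the sharp square function estimate for the cone in $\R^3$, but with $L^3$ playing the role of $L^4$. I would carry it out in three stages: free waves first, then a general reduction, then the transfer to arbitrary $f_\theta$.

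\emph{Stage 1: free waves.} First treat the case where every $\widehat f_\theta$ is supported on the sector's $h$-neighbourhood of $\Gamma$, and $f=\sum_\theta f_\theta$ is essentially a solution of the half-wave equation, localized to a spacetime ball of radius $R$. Decompose each $f_\theta$ into wave packets adapted to planks dual to $\theta$. These have dimensions $R\times R^{1/2}\times R^{1/2}\times 1$, pointing in the null direction $(\omega_\theta,1)$ up to normalization. Writing $f_\theta=\sum_T c_T\phi_T$, the right-hand side of \eqref{target} is comparable, after local constancy on planks, to
\[
\Bigl\|\Bigl(\sum_T|c_T|^2\1_T\Bigr)^{1/2}\Bigr\|_{L^3}.
\]
The plate maximal theorem of Di Plinio and Parissis controls the maximal averages over these planks. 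I would use it to pass between this packet expression and $\bigl\|(\sum_\theta|f_\theta|^2)^{1/2}\bigr\|_3$, and to absorb Schwartz tails.

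\emph{Stage 2: dichotomy on the packet coefficients.} Pigeonhole the coefficients into dyadic size classes and the packets into dyadic multiplicity classes, at a loss of $R^\varepsilon$. This leaves two cases.
\begin{itemize}
\item \emph{Non-concentrated packets.} If the packets are not concentrated, with at most a bounded number through a typical point relative to the critical density, then the critical local smoothing input gives $\|f\|_{L^3(B_R)}\lesssim R^{\varepsilon}$ times the initial $L^3$ size. Combined with the square function at time zero, this matches the right-hand side.
\item \emph{Concentrated packets.} If the packets are geometrically concentrated, so that many packets in a set of directions meet a common region, I would use the round-cone propagation input. The concentrated sub-family then has frequency support near a lower-dimensional round cone, i.e.\ directions lying on a circle of $S^2$. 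This is the phrase ``geometric concentration of packet coefficients $\to$ regularity of the analyzed wave'' from the abstract. On that piece, induct on scales: rescale by Lorentz transformations, parabolic rescaling of sectors, to an instance of \eqref{target} at a smaller $R'$. Then close the induction with the usual $R^\varepsilon$ bookkeeping, where the constant multiplies as $C_\varepsilon (R/R')^{\varepsilon}C_\varepsilon R'^{\varepsilon}$ only when the gain from round-cone propagation beats the $R^{O(\delta)}$ losses.
\end{itemize}

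\emph{Stage 3: from free waves to general $f_\theta$.} To pass from free waves to arbitrary $f_\theta$ with $\supp\widehat f_\theta\subset\theta$, foliate the $h$-neighbourhood of $\Gamma$ in $\tau$. For $|s|\lesssim h$, write
\[
\widehat f_\theta(\xi,\tau)=\int \widehat f_\theta(\xi,|\xi|+s)\,\delta(\tau-|\xi|-s)\,ds,
\]
so $f=\int e^{ist}F_s\,ds$ over an interval of length $\sim h$, where each $F_s$ is a free wave. Apply Stage 1 to each $F_s$ after a smooth-in-$s$ Fourier expansion. Concretely, since the $s$-interval has length $h$ and the physical scale is $R=h^{-1}$, replace the integral by a Fourier series in $s$ with rapidly decaying coefficients. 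Use Minkowski's inequality. Then recombine the square functions with the plate maximal theorem, which shows that $\bigl(\sum_\theta|F_{s,\theta}|^2\bigr)^{1/2}$ is pointwise dominated by maximal plank averages of $\bigl(\sum_\theta|f_\theta|^2\bigr)^{1/2}$. Fixed enlargements of planks and changes of the radial interval are handled by a finite decomposition and a Lorentz rescaling.

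The main obstacle is Stage 2: making the dichotomy quantitative so that the concentrated case really has small enough size to be handled by induction. I would first try a broad/narrow split. In the broad case, the angular directions of significant packets through a point are not all near one great circle, and trilinear Kakeya-type bounds plus local smoothing suffice. In the narrow case, the directions cluster near a circle, and I would invoke round-cone propagation and rescale.
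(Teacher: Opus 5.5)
Your proposal has two genuine gaps, and both concern how the external inputs are used.

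\emph{Stage~2 has no mechanism that the inputs support.} The round-cone input is not a statement about packet subfamilies whose directions cluster near a circle of $S^2$. Nothing available gives a Lorentz-rescaling induction on scales or trilinear Kakeya bounds. What the input gives is $h^{-1/2}\sum_{j,Q}m_{j,Q}^{3/2}\lesssim h^{-100\gamma-\zeta}e\sqrt{\kappa}$, where $\kappa$ is the regularity constant of the \emph{analyzed} initial wave $|V_1v_0|^2\,d\nu$ with respect to round and hyperquadric tests. You never say how this inequality enters. Your non-concentrated case does not close either. Local smoothing bounds the wave by $\|f(\cdot,0)\|_{L^3(\R^3)}$, but this is not controlled by the time-zero square function: coherent focusing of $\sim h^{-1}$ plates through one point beats it by a power of $h^{-1}$. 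Non-concentration at a ``typical'' point does not rule this out.

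The missing idea is that no dichotomy or induction is needed.
\begin{enumerate}
\item For an \emph{arbitrary} array $d$, the packet decay bound and the enlargement covering property show that the analyzed synthesized wave is regular with constant $\lesssim\kappa_\gamma(d)=\sup_E\sum_{a\in E}|d_a|^2/w(E)$.
\item Converting terminal row masses into an $L^3$ norm then gives $\|\sum_ad_ap_a\|_{L^3}^3\lesssim h^{-100\gamma-\zeta}e(d)\sqrt{\kappa_\gamma(d)}$.
\item Greedily removing near-extremal tests splits $d$ into $O(\log n)$ groups.
\item The swept-volume bound $|\bigcup_{a\in E}P_a|\lesssim w(E)$ and H\"older turn each $e(q)^{3/2}/\sqrt{w(E)}$ into $\int D_q^{3/2}$.
\end{enumerate}
This yields $\|\sum_ad_ap_a\|_3\lesssim h^{-\epsilon}\|D_d^{1/2}\|_3$, and it is the step the whole proof rests on.

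\emph{The Di Plinio--Parissis theorem alone cannot return you to the square function.} Your Stage~3 Fourier expansion is close to the paper's: the coefficients are samples of $f_\theta$, and pieces with different time origins are rebased. It gives the pointwise bound $H_a\lesssim\sum_\theta|f_\theta|^2*K_\theta$, where $K_\theta$ is adapted to a spacetime plank in $\R^4$ whose plate cross-section moves along $-\omega_\theta$. What you then need is $\|\sum_\theta|f_\theta|^2*K_\theta\|_{3/2}\lesssim h^{-\epsilon}\|\sum_\theta|f_\theta|^2\|_{3/2}$, which is a conic Kakeya-type estimate.
\begin{itemize}
\item It is not a pointwise domination by a maximal function of $(\sum_\theta|f_\theta|^2)^{1/2}$, since each $\theta$ has its own plank.
\item It does not follow from a spatial plate theorem in $\R^3$. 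The direction-dependent shift $x+t\omega$ prevents moving the supremum inside the time integral.
\end{itemize}
The paper proves this estimate by applying local smoothing to Khintchine-randomized packet sums. It uses the lower bound $|U(t)p_\theta|^2\gtrsim h^{-2}$ on the moving plate, and invokes Di Plinio--Parissis only at time zero. So local smoothing belongs in this averaging step, not in a non-concentrated case. Your Stage~1 ``comparability'' between the packet expression and the square function is likewise only one-sided, and even that side needs this estimate.
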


The functions in this statement are arbitrary within their sectors. In particular,
they need not be solutions of a wave equation, and need not arise from one
prescribed partition of a single function. This distinction matters when we
pass from a free-wave estimate to \eqref{target}.

The right side is the norm of the \emph{pointwise} square function. For $p\geq2$,
Minkowski's inequality gives
\[
 \left\|\left(\sum_\theta|f_\theta|^2\right)^{1/2}\right\|_p
 \leq \left(\sum_\theta\|f_\theta\|_p^2\right)^{1/2}.
\]
Consequently an estimate with the latter expression on the right does not,
by itself, give the theorem. At $p=2$, bounded overlap of the frequency
supports gives the reverse square function estimate by Plancherel. The
content here is its extension to $p=3$ with an arbitrarily small power loss.

\subsection{History}
The connection between square functions, local smoothing, and maximal
operators was developed by Mockenhaupt, Seeger, and Sogge
\cite{MSS92,MSS93}, following Sogge's formulation of the local-smoothing
problem \cite{Sogge}. Wolff \cite{Wolff} established local-smoothing type
estimates for large $p$ using wave packets and geometric incidence estimates.
Bourgain and Demeter \cite{BD} subsequently proved sharp $\ell^2$ decoupling
for the cone. Decoupling is a major part of this history, but the distinction
between the two right sides above remains relevant to the present question.

Guth, Wang, and Zhang \cite{GWZ} proved the sharp $L^4$ reverse square function
estimate for the cone in $\R^3$, and deduced local smoothing in $2+1$
dimensions. Their frequency sectors have dimensions
$1,R^{-1/2},R^{-1}$. The cone considered in this article lies in $\R^4$;
it has two curved directions, and the estimate studied here is the $L^3$
analogue. Thus the ambient dimension and the exponent both change.

Di Plinio and Parissis \cite{DPP} proved sharp estimates for maximal subspace
averages, including the codimension-one plate estimate used below. The
September 2026 manuscript \cite{OAI} states critical local smoothing for the
three-dimensional wave equation and a propagation estimate controlled by
round and hyperquadric regularity tests. We use these statements as specified
external inputs. The present article proves the transfer to arbitrary
thickened-cone functions; it does not supply an independent proof of those
statements.

\subsection{The tools used in the proof}
There are two uses of the local-smoothing article. Its scalar local-smoothing
estimate, Theorem 1.1, is combined with randomization and the Di Plinio--Parissis
plate maximal theorem to obtain a positive averaging estimate for physical
cone planks. Its round-cone theorem, Theorem 9.1, controls the growth of
analyzed wave energy in terms of its initial concentration. For this second
use we also need the tests in Definitions 8.1--8.2 and their enlargement
property, Lemma 8.4. We state all of these below in the forms used here.

The Di Plinio--Parissis input is specifically the codimension-one case of
Theorem C in \cite{DPP}. It concerns averages over spatial plates in $\R^3$,
not spacetime cone planks. Interpolation gives an $L^3$ bound with logarithmic
loss. Local smoothing then converts this spatial estimate into the spacetime
averaging estimate needed to recover the original square function.

The remainder of the argument has three steps. We split an arbitrary packet
array according to its concentration in the regularity tests. We compare that
concentration with the energy of the analyzed wave and apply the propagation
input. Finally, a Fourier-series expansion and the positive averaging estimate
return us to the original $f_\theta$. This last step retains cancellation
inside each sector.

\section{The cone and its plank decompositions}\label{geometry}
We use the Fourier transform
$\widehat f(\zeta)=\int e^{-iX\cdot\zeta}f(X)\,dX$.
Fixed powers of $2\pi$ are absorbed into normalizations. The notation
$A\lesssim B$ means $A\leq CB$ with a constant independent of $R$ or $h$.
A subscript records additional dependence. We always keep the angular chart,
cutoffs, and enlargement constants fixed.

\subsection{The angular partition}
A maximal $\sqrt h$-separated set on $S^2$ has $O(h^{-1})$ points, and its
$\sqrt h$-balls cover the sphere. Separation implies bounded overlap of the
$C\sqrt h$-balls for each fixed $C$. Choose smooth nonnegative bumps on these
balls and divide them by their sum. This gives a partition
\[
 \sum_\theta\psi_\theta(\omega)=1,\qquad
 \supp\psi_\theta\subset\{\omega:|\omega-\omega_\theta|\leq C\sqrt h\},
 \qquad |\nabla^k\psi_\theta|\lesssim_k h^{-k/2}.
\]
Thus a function supported in the thickened cone can be decomposed by
\[
 \widehat f_\theta(\xi,\tau)
 =\psi_\theta(\xi/|\xi|)\widehat f(\xi,\tau),
\]
with harmless radial and normal cutoffs if needed. The word ``partition''
refers to this smooth partition of unity. The supporting planks form a
finite-overlap cover, rather than a disjoint rectangular partition.

\subsection{Why the frequency planks have these dimensions}
Fix a sector centre $\omega$, and choose orthonormal vectors $e_1,e_2$ in
$\omega^\perp\subset\R^3$. An orthonormal frame in frequency space is
\[
 g_\omega=\frac{(\omega,1)}{\sqrt2},\qquad
 E_j=(e_j,0)\ (j=1,2),\qquad
 n_\omega=\frac{(\omega,-1)}{\sqrt2}.
\]
The vector $g_\omega$ points along a generator of the cone; $E_1,E_2$ are
its two angular tangent directions; and $n_\omega$ is normal to the cone.
Put $q_\omega=(\tfrac32\omega,\tfrac32)$. If
$(\xi,\tau)=(r\omega',r+s)$ belongs to \eqref{sectorintro}, then
\begin{align*}
 |((\xi,\tau)-q_\omega)\cdot g_\omega|&\lesssim1,\\
 |((\xi,\tau)-q_\omega)\cdot E_j|&\lesssim\sqrt h,\\
 |((\xi,\tau)-q_\omega)\cdot n_\omega|
 &=2^{-1/2}|r(\omega'\cdot\omega-1)-s|\lesssim h.
\end{align*}
For the last inequality use
$1-\omega'\cdot\omega=\tfrac12|\omega'-\omega|^2$.
The sector therefore lies in an affine rectangular plank $\Theta_\omega$
with the asserted dimensions. Conversely, a sufficiently small fixed
rectangular plank of this shape, on a fixed positive radial band, lies in a
fixed enlargement of the sector. The two descriptions are interchangeable
up to fixed subdivisions and enlargements. Planks extending to the vertex
are not used.

This calculation explains the square root scale: an angular displacement
of size $\sqrt h$ produces a normal error of size $h$. There is no
corresponding quadratic error along the radial generator, so its frequency
length remains comparable to one.

\subsection{Physical dual planks}
The physical dual of $\Theta_\omega$ is a translate of a box satisfying
\[
 |X\cdot g_\omega|\lesssim1,\qquad
 |X\cdot E_j|\lesssim h^{-1/2},\qquad
 |X\cdot n_\omega|\lesssim h^{-1}.
\]
These reciprocal side lengths are the scales supplied by the uncertainty
principle. For reference,
\[
\begin{array}{c|ccc}
 &\text{generator}&\text{two angular directions}&\text{normal}\\ \hline
 \text{frequency plank}&1&\sqrt h,\sqrt h&h\\
 \text{physical dual plank}&1&h^{-1/2},h^{-1/2}&h^{-1}
\end{array}
\]
The physical volume is comparable to $h^{-2}=R^2$. Its long direction is
$n_\omega$, so in spacetime it follows a ray with $x+t\omega$ constant.
This sign agrees with the propagator $e^{it|D|}$ in our Fourier convention.

For each fixed $\omega$, translates of one half-open dual box by the
corresponding rectangular lattice tile physical space. Different directions
have different tilings. Wave packets are smooth functions localized to
these boxes, with rapidly decreasing tails. They are not obtained by
multiplying by characteristic functions of the boxes: such multiplication
would destroy the desired frequency support. The precise smooth expansion
is given in Section~\ref{thickened}.

\subsection{Rescaling and null coordinates}\label{nullcoordinates}
Set $F(X)=f(RX)$. Then
$\widehat F(\zeta)=R^{-4}\widehat f(\zeta/R)$ and
$\|F\|_3=R^{-4/3}\|f\|_3$. Both sides of \eqref{target} have this same
factor. We may therefore work at frequency $h^{-1}$, with cone thickness
$O(1)$. The physical plank dimensions become
$h,\sqrt h,\sqrt h,1$, and their volume becomes $h^2$.
For the proof we may take $h$ dyadic: replacing $R$ by a neighbouring dyadic
number changes the cap sizes and overlaps by fixed factors only.

On a chart where $\tau+\xi_3\geq c\tau$, put
\[
 \eta=(\xi_1,\xi_2),\qquad \rho=\frac{\tau+\xi_3}{2},\qquad
 \sigma=\frac{\xi_3-\tau}{2}.
\]
The cone equation becomes
\begin{equation}\label{nullcone}
 \sigma=-\frac{|\eta|^2}{4\rho},\qquad \rho>0.
\end{equation}
If the original physical coordinates are $(x',x_3,t_{\rm old})$, the dual
coordinates are
\[
 b=x',\qquad y=-(x_3+t_{\rm old}),\qquad t=x_3-t_{\rm old}.
\]
Indeed the Fourier pairing is $b\cdot\eta-y\rho+t\sigma$.
A finite number of rotated charts covers the cone. On each one,
$h\rho$ lies in a fixed compact subinterval of $(0,\infty)$ and
$A=\eta/(2\rho)$ lies in a fixed bounded set.

The associated free evolution is
\begin{equation}\label{nullprop}
 \widehat{U_0(t)v}(\eta,-\rho)
 =e^{-it|\eta|^2/(4\rho)}\widehat v(\eta,-\rho).
\end{equation}
Its rays are $(b,y)=(B+tA,Y+t|A|^2)$. All changes just made are linear
changes of frequency variables or ordinary dilations. Their dual changes
multiply both sides of a square function estimate by the same determinant
factor.

\section{Preliminaries and external results}\label{preliminaries}
We state the external results in full in the specialized forms needed here.
The basic tools used in addition are Plancherel, H\"older, Minkowski,
Fourier series, and Khintchine's inequality. In the latter, independent
signs $\varepsilon_j=\pm1$ satisfy
\[
 \mathbb E\left|\sum_j\varepsilon_jz_j\right|^3
 \asymp\left(\sum_j|z_j|^2\right)^{3/2}.
\]
All norms below are with respect to Lebesgue measure unless a row measure
is displayed.

\subsection{The Di Plinio--Parissis plate maximal theorem}
For $0<a\leq\tfrac12$ and $\omega\in S^2$, let
\[
 \begin{split}
 T_a(\omega)&=\{z\in\R^3:|z-(z\cdot\omega)\omega|\leq1,
                  \ |z\cdot\omega|\leq a\},\\
 N_a u(x)&=\sup_{\omega\in S^2}\frac1{|T_a(\omega)|}
                    \int_{T_a(\omega)}|u(x-z)|\,dz.
 \end{split}
\]
These are two-dimensional plates thickened in their normal direction.
The weak $L^2$ norm is
$\|u\|_{2,\infty}=\sup_{\lambda>0}\lambda
 |\{x:|u(x)|>\lambda\}|^{1/2}$.

\begin{hypothesis}[Di Plinio--Parissis, Theorem C]\label{dppinput}
The codimension-one plate maximal operator in $\R^3$ satisfies
\begin{equation}\label{dppweak}
 \|N_a u\|_{2,\infty}\leq C\bigl(\log(2/a)\bigr)^{1/2}\|u\|_2.
\end{equation}
This is the case $d=2$, $n=3$ of \cite[Theorem C]{DPP}.
\end{hypothesis}

We call quoted results ``external inputs'' to distinguish them from lemmas
proved here; \eqref{dppweak} is the published Di Plinio--Parissis theorem.
It implies
\begin{equation}\label{dppstrong}
 \|N_a u\|_3\lesssim\bigl(\log(2/a)\bigr)^{1/3}\|u\|_3.
\end{equation}
Here is the interpolation argument. The part of $u$ bounded by $\lambda/2$
has maximal average at most $\lambda/2$. Apply \eqref{dppweak} to the
remaining part and integrate its distribution function:
\[
\begin{split}
 \|N_a u\|_3^3
 &=3\int_0^\infty\lambda^2|\{N_a u>\lambda\}|\,d\lambda\\
 &\lesssim\log(2/a)\int_0^\infty
           \int_{|u|>\lambda/2}|u(x)|^2\,dx\,d\lambda
 \lesssim\log(2/a)\|u\|_3^3.
\end{split}
\]
After an isotropic dilation by $\sqrt h$, the choice $a=\sqrt h$
gives the spatial plates of dimensions $\sqrt h,\sqrt h,h$.
Fixed enlargements and positive Schwartz majorants satisfy the same bound:
decompose a majorant into isotropically dilated plates with summable
coefficients. The finitely many large values of $h$ are harmless.

\subsection{Scalar local smoothing}
Let $J^s=(1-\Delta)^{s/2}$.
\begin{hypothesis}[Critical scalar local smoothing]\label{lsinput}
Theorem 1.1 of \cite{OAI} states that, for every $s>0$,
\begin{equation}\label{lsquoted}
 \|e^{it|D|}g\|_{L^3(\R^3\times[1,2])}
 \leq C_s\|J^s g\|_{L^3(\R^3)}.
\end{equation}
\end{hypothesis}

On a fixed annular band $c h^{-1}\leq|\xi|\leq C h^{-1}$ this gives
\begin{equation}\label{LS}
 \|e^{it|D|}g\|_{L^3(\R^3\times[-2,2])}
 \leq C_s h^{-s}\|g\|_3.
\end{equation}
To include times near zero, split positive times into dyadic intervals
$[a,2a]$. Simultaneous dilation of space and time gives a factor
$a^{1/3}(a/h)^s$ for $a\geq h$; these factors have a summable geometric
series. For $|t|\leq h$, the frequency-localized multiplier has a uniformly
integrable kernel after scaling by $h$. Negative times follow by conjugation.
This proves \eqref{LS} from the quoted formulation.

\subsection{Analysis of a free wave}
The second input from \cite{OAI} requires some definitions. They are included
because a bound for arbitrary packet coefficients cannot be substituted
for the source theorem without checking its regularity hypothesis.

At interval length $\ell\in[h,1]$, write
\[
 d_\ell=(h/\ell)^{1/2},\qquad q_\ell=(h\ell)^{1/2}.
\]
Choose $\psi\in C_c^\infty(\R^2)$ with $\int|\psi|^2=1$, and choose a
smooth radial cutoff $\chi$, equal to one on the fixed incoming band in
$h\rho$. For each angular parameter $A$, define
\begin{equation}\label{analyzer}
 (V_\ell v)(A,b,y)=\mathcal F^{-1}_{\eta,-\rho}
 \left[\chi(h\rho)\psi\left(\frac{\eta/(2\rho)-A}{d_\ell}\right)
                 \widehat v(\eta,-\rho)\right](b,y),
 \qquad d\nu_\ell=d_\ell^{-2}\,dA\,db\,dy.
\end{equation}
The triple $(A,b,y)$ is called a \emph{row}: it records the direction and
position of an analyzed piece of the wave. Plancherel and
$d_\ell^{-2}\int|\psi((a-A)/d_\ell)|^2\,dA=1$ give
\begin{equation}\label{isometry}
 \int|V_\ell v|^2\,d\nu_\ell=\|v\|_2^2
\end{equation}
on this band. This is the ``plateau analysis'' of \cite[Definition 8.1]{OAI}.
For an interval $I=[T,T+\ell]$, its row energy measure is
\[
 d\mu_I=|V_\ell U_0(T)v_0|^2\,d\nu_\ell.
\]

\subsection{The regularity tests}
At time $T$, length $\ell$, centre $(b_*,y_*,A_*)$, and width $r>0$, set
\begin{equation}\label{testcoordinates}
\begin{aligned}
 u&=(t-T)/\ell,&
 x&=\frac{b-b_*-\ell uA_*}{\ell r},\\
 z&=\frac{y-y_*-2A_*\cdot(b-b_*)+\ell u|A_*|^2}{\ell r^2},&
 V&=(A-A_*)/r.
\end{aligned}
\end{equation}
The normalized rays have velocities $(V,|V|^2)$. A \emph{round test}
imposes $|x|,|z|,|V|\leq1$ at $u=0$ and has thickness parameter $f=1$.
A \emph{hyperquadric test} imposes these same bounds and also
\begin{equation}\label{testFG}
 |F|\leq f,\quad |G|\leq f,\quad 0<f\leq1,
 \qquad
 \begin{cases}
 F=c+du+\alpha\cdot x+\beta z+k(uz-|x|^2),\\
 G=(\partial_u+V\cdot\nabla_x+|V|^2\partial_z)F,
 \end{cases}
\end{equation}
where
\begin{equation}\label{discriminant}
 |\alpha|^2-4\beta d+4kc=1,\qquad
 \max(|c|,|d|,|\alpha|,|\beta|,|k|)\leq100.
\end{equation}
Every test satisfies the uncertainty condition $rf\geq d_\ell$.
For fixed $0<\gamma<1$ its weight is
\begin{equation}\label{testweight}
 w(E)=r^4f^{1-\gamma}.
\end{equation}
A positive row measure $\mu$ has \emph{regularity constant} $\kappa$ if
$\mu(E)\leq\kappa w(E)$ for every test at the given length and time.
This is \cite[Definition 8.2]{OAI}.

The interpretation is useful. A round test collects nearby positions and
directions. A thin test collects rays close to a specified quadratic
hypersurface and nearly tangent to it: $F$ measures displacement and $G$
measures change along the ray. The discriminant normalization prevents the
polynomial from being arbitrarily small merely because all its coefficients
are small. The uncertainty condition excludes tests finer than the angular
resolution of the analysis.

\begin{hypothesis}[Enlargement of tests]\label{enlargedinput}
In the tests above, enlarge the normalized round bounds by fixed powers of
$P\geq2$, allow coefficients bounded by fixed powers of $P$ while keeping
discriminant one, and replace the two sublevels by $P^Cf$.
The resulting set can be covered by admissible tests whose total weight is
at most $C'P^{C''}r^4f^{1-\gamma}$. The constants depend on the specified
enlargement powers, not on the scale or the test. Consequently any positive
measure regular with constant $\kappa$ assigns it mass at most
$C'P^{C''}\kappa r^4f^{1-\gamma}$.
\end{hypothesis}

This is the geometric covering assertion in the proof of
\cite[Lemma 8.4]{OAI}; the lemma itself states the resulting mass bound.
The covering assertion applies to atomic measures as well as row energy
measures. For clarity, the covering uses round tests when $f$ is larger than
a fixed inverse power of $P$. Otherwise the discriminant identity gives a
nonzero transverse gradient. Subdivide into parabolic boxes of width
$P^{-D}$, centre at a witness, and normalize the polynomial there.
Its coefficients are then bounded and its relative thickness is at most a
fixed power of $P$ times $f/P^{-D}$. There are polynomially many boxes and
the absolute uncertainty width does not decrease. This gives the asserted
total cost. We will use the stated covering property as part of the external
input.

\subsection{The round-cone propagation input}
We now give the special case of the propagation theorem that is used in the
proof. Here and below a \emph{fixed polynomial range} means that some fixed
$B$ bounds the number of labels, the size of the position support, and the
relevant parameter ratios by $C h^{-B}$. The exponent $B$ is fixed before
$h$ tends to zero. Rapid tails mean that the discarded energy is at most
$C_Nh^N$ times the total energy for every prescribed $N$.

\begin{hypothesis}[Round-cone propagation]\label{propinput}
Let $v_0\in L^2(\R^3)$ have energy $e=\|v_0\|_2^2>0$, with $h\rho$ in
a fixed compact positive band and $\eta/(2\rho)$ in a fixed bounded set.
Suppose its initial row measure $|V_1v_0|^2d\nu_1$ is regular with constant
$\kappa$, and that the data form a uniform fixed polynomial family: the
ratios $e/\kappa$ and $\kappa/e$ have fixed polynomial bounds, the initial
row density relative to $e$ has a fixed polynomial bound, and the initial
row energy has rapid tails outside a fixed polynomial phase box.
For $j=0,\ldots,h^{-1}-1$ and spatial cubes $Q$ of side $h$, put
\[
 m_{j,Q}=\int_{\{(A,b,y):(b,y)\in Q\}}
          |V_hU_0(jh)v_0(A,b,y)|^2\,d\nu_h.
\]
For every $0<\gamma<1$ and every $\zeta>0$,
\begin{equation}\label{propquoted}
 h^{-1/2}\sum_{j,Q}m_{j,Q}^{3/2}
 \leq C_{\gamma,\zeta,B}h^{-100\gamma-\zeta}e\sqrt\kappa.
\end{equation}
The constant is uniform when all fixed setup constants and tail bounds are
held fixed. Polynomial truncation followed by rapid-tail summation is
understood if the spatial support is unbounded.
\end{hypothesis}

This is the free scalar specialization of \cite[Theorem 9.1]{OAI}.
At terminal length $h$ the angular aperture is $d_h=1$. A round test with
fixed width contains one spatial $h$-cube and the bounded set of all row
angles, so its weight is comparable to one. The assignments defining
$m_{j,Q}$ are disjoint at each time. These facts identify the displayed
functional with the bounded-weight case of the source theorem.

The source states an upper growth exponent $\Gamma_{\rm wav,round}\leq100\gamma$;
\cite[(10.22)]{OAI} explains its finite-scale use. If
\eqref{propquoted} failed on a family with fixed setup constants, a sequence
$h\downarrow0$ would have normalized growth exceeding $h^{-100\gamma-\zeta}$,
contradicting that exponent bound. This explains both the extra $\zeta$ and
the uniformity requirement. No contractions or Hilbert-valued extension of
the source theorem are needed here.

For the remainder of the article the local-smoothing and propagation
statements, including the enlargement property, are assumptions. All
passages from them to \eqref{target} are proved below.

\section{A conic averaging consequence of scalar local smoothing}\label{averaging}
Write $U(t)=e^{it|D|}$. We use the frequency-localized form
\eqref{LS} of External input~\ref{lsinput}, and the spatial maximal estimate
\eqref{dppstrong}. This section is the first of the two uses of the
local-smoothing article.

For $\omega\in S^2$, write $x=x_\perp+x_\parallel\omega$ and define the spatial plate
\[
 B_{\omega,h}=\{x:|x_\perp|\leq\sqrt h,\ |x_\parallel|\leq h\}.
\]
Its volume is comparable to $h^2$. The associated spacetime plate is
\begin{equation}\label{plate}
 P_{\omega,h}=\{(x,t):|t|\leq1,\ x+t\omega\in B_{\omega,h}\}.
\end{equation}
Fixed enlargements of these sets are allowed.

We use the established codimension-one plate maximal theorem of Di Plinio and Parissis \cite[Theorem C and Section 7]{DPP}. At $p=3$, it implies
\begin{equation}\label{spatialmax}
 \left\|\sup_\omega\frac{\1_{B_{\omega,h}}}{|B_{\omega,h}|}*|a|\right\|_3
 \lesssim (1+\log h^{-1})^{1/3}\|a\|_3.
\end{equation}
This is \eqref{dppstrong} after dilation; it also applies to the positive Schwartz majorants described in the preliminaries.

\begin{proposition}[Positive conic averaging]\label{avg}
Assume \eqref{LS}. For every finite family of nonnegative functions $G_\theta$ on $\R^4$ and directions $\omega_\theta$,
\begin{equation}\label{avg4}
 \left\|\sum_\theta K_{\theta,h}*G_\theta\right\|_{3/2}
 \leq C_\epsilon h^{-\epsilon}
 \left\|\sum_\theta G_\theta\right\|_{3/2},
\end{equation}
where $K_{\theta,h}$ is a normalized characteristic kernel of \eqref{plate}, or a fixed rapidly decreasing majorant adapted to it. Equivalently, the associated conic plate maximal operator has $L^3$ norm $O_\epsilon(h^{-\epsilon})$.
\end{proposition}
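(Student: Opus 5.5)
The plan is to dualize, reduce to a maximal estimate, and then write each conic plate average as a time average of spatial plate averages applied to a wave evolution. The spatial plate maximal theorem \eqref{spatialmax} then removes the supremum over directions, and \eqref{LS} removes the evolution.

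\emph{Step 1: duality.} For $0\le g\in L^3(\R^4)$ and reflected kernels $\tilde K_{\theta,h}(X)=K_{\theta,h}(-X)$,
\[
 \int\sum_\theta (K_{\theta,h}*G_\theta)\,g=\sum_\theta\int G_\theta\,(\tilde K_{\theta,h}*g)
 \le\int\Bigl(\sum_\theta G_\theta\Bigr)\,\mathcal M g,
 \qquad \mathcal M g=\sup_{\omega\in S^2}\tilde K_{\omega,h}*g .
\]
By H\"older, \eqref{avg4} follows from $\|\mathcal Mg\|_3\lesssim_\epsilon h^{-\epsilon}\|g\|_3$. The converse implication follows by choosing $G_\theta$ supported where $\theta$ nearly attains the supremum, which gives the stated equivalence. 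I also replace $\1_{P_{\omega,h}}/|P_{\omega,h}|$ by a positive Schwartz majorant of the form $\Phi_\omega(x,t)=\phi(t)\,\Psi_\omega(x+t\omega)$. Here $\Psi_\omega\ge0$ is adapted to $B_{\omega,h}$ and has Fourier support in the dual plate $\{|\xi\cdot\omega|\lesssim h^{-1},\ |\xi_\perp|\lesssim h^{-1/2}\}$; take for instance $\Psi_\omega=|\check\beta_\omega|^2$. Then
\[
 \Phi_\omega*g(x,t)=\int\phi(s)\,\bigl(\Psi_\omega*g(\cdot,t-s)\bigr)(x+s\omega)\,ds .
\]

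\emph{Step 2: trading translation for the wave propagator.} On the Fourier side, translation by $s\omega$ multiplies $\widehat{\Psi_\omega}(\xi)$ by $e^{is\xi\cdot\omega}$. I decompose the spatial frequency of $g(\cdot,t')$ dyadically, $|\xi|\sim\lambda\le h^{-1}$; larger frequencies are killed by $\widehat{\Psi_\omega}$.

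At the top scale $\lambda\sim h^{-1}$, the support of $\widehat{\Psi_\omega}$ forces $\xi$ to lie within angle $\sqrt h$ of $\pm\omega$. There $|\xi\cdot\omega|-|\xi|=O(h\lambda)=O(1)$, so for $|s|\le1$ we have $e^{is\xi\cdot\omega}=e^{\pm is|\xi|}\,m_s(\xi)$, where the symbol $m_s$ is smooth at the plate scale. Consequently
\[
 \bigl|(\Psi_\omega*g_\lambda(\cdot,t-s))(x+s\omega)\bigr|\lesssim \Psi^\sharp_\omega*\bigl|U(\pm s)\,g_\lambda(\cdot,t-s)\bigr|(x)
\]
for a slightly larger Schwartz majorant $\Psi^\sharp_\omega$. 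The only point needing care is the uniformity of the symbol bounds in $\omega$ and $s$.

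The lower scales $\lambda\ll h^{-1}$ are the same situation at a coarser cone thickness. At scale $\lambda$ the relevant angular aperture is $(\lambda h)^{-1/2}\cdot h^{1/2}$, and the same phase comparison holds after an anisotropic rescaling. Each such scale therefore yields a copy of the top-scale estimate with $h$ replaced by $(\lambda h)^{-1}h$, and there are only $O(\log h^{-1})$ scales.

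\emph{Step 3: closing the estimate.} Taking the supremum in $\omega$ and applying \eqref{spatialmax} to the majorant at each fixed $s$ and $t$, then H\"older in $s$ and the substitution $t'=t-s$, gives
\[
 \|\mathcal Mg_\lambda\|_{L^3_{x,t}}\lesssim(\log h^{-1})^{1/3}\Bigl(\int\!\!\int_{|s|\le1}\|U(\pm s)g_\lambda(\cdot,t')\|_{L^3_x}^3\,ds\,dt'\Bigr)^{1/3}\lesssim_\epsilon h^{-\epsilon}\|g\|_3 ,
\]
using \eqref{LS} for each fixed $t'$ together with boundedness of Littlewood--Paley projections on $L^3$. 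Summing over the $O(\log h^{-1})$ dyadic scales costs only another logarithm.

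\emph{Main obstacle.} I expect the main difficulty to be the nonnegativity bookkeeping in Step 2. The function $g$ is positive, but its frequency pieces $g_\lambda$ are not, and the majorization must be performed on $|U(s)g_\lambda|$ rather than on $g$ itself. The pieces must then be recombined in $L^3$ without losing more than $h^{-\epsilon}$. If the crude triangle inequality over dyadic scales turns out not to suffice (for example, because of the angular pieces at each scale), I would insert random signs $\varepsilon_\lambda$ or $\varepsilon_\theta$ before applying \eqref{LS}. Khintchine's inequality then converts the resulting square function back to $\|g\|_3$ through the Littlewood--Paley inequality; this is where the randomization mentioned in the introduction enters.
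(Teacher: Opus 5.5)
\textbf{There is a genuine gap: the lower dyadic scales in Step 2.}

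Your top-frequency computation is sound. The treatment of the lower dyadic scales does not work, and those scales cannot be avoided. A nonnegative kernel has $\widehat{\Phi_\omega}(0)=\int\Phi_\omega>0$, so $\Phi_\omega*g$ always carries low-frequency content. Once you pass to $g_\lambda$, you have also given up the positivity of $g$.

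The false step is the claim that scale $\lambda$ is "a copy of the top-scale estimate with $h$ replaced by $\lambda^{-1}$".
\begin{itemize}
\item For $h^{-1/2}\le\lambda\ll h^{-1}$, the support of $\widehat{\Psi_\omega}$ on $|\xi|\sim\lambda$ is a pair of caps of angular radius about $h^{-1/2}\lambda^{-1}$. This is much larger than $\lambda^{-1/2}$.
\item On these caps, $|\xi|-|\xi\cdot\omega|=|\xi_\perp|^2/(|\xi|+|\xi\cdot\omega|)$ reaches size about $h^{-1}\lambda^{-1}\gg1$.
\item For $1\ll\lambda\le h^{-1/2}$ there is no angular localization at all, and this difference is as large as $\lambda$.
\end{itemize}
So $e^{is\xi\cdot\omega}e^{\mp is|\xi|}$ is not a plate-adapted symbol, and translation by $s\omega$ is not comparable to $U(\pm s)$ on these pieces.

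Geometrically, $\Psi_\omega*\check\beta_\lambda$ is a plate of dimensions $\lambda^{-1}\times\sqrt h\times\sqrt h$, or a $\lambda^{-1}$-ball when $\lambda\le h^{-1/2}$. Sliding it along $s\omega$ for $|s|\le1$ gives a set that is not the canonical plate $\lambda^{-1}\times\lambda^{-1/2}\times\lambda^{-1/2}\times1$. In the second case it is a light-ray tube of width $\lambda^{-1}$. Frequency-$\lambda$ waves do not travel coherently along such sets, so no application of \eqref{LS} to $g_\lambda$ controls them.

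The proposed fixes do not help:
\begin{itemize}
\item An anisotropic rescaling adapted to one direction cannot be applied to a supremum over all $\omega$.
\item Splitting the caps into $\lambda^{-1/2}$-sectors costs a power of $h^{-1}\lambda^{-1}$, unless you already have a square function estimate at that scale, which is circular.
\item The random signs $\varepsilon_\lambda$ do not help, because the obstruction is a failed phase comparison, not a loss in summation.
\end{itemize}

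\textbf{The missing idea} is to manufacture positivity by squaring a top-frequency wave, and to work on the primal side of \eqref{avg4} rather than with the maximal function of a single $g$.
\begin{enumerate}
\item Take $L^2$-normalized packets $p_{\theta,h}$ with Fourier support in a box of dimensions $ch^{-1}\times ch^{-1/2}\times ch^{-1/2}$ around $h^{-1}\omega_\theta$. Then $|U(t)p_{\theta,h}(x)|^2\ge c_0h^{-2}\1_{B_{\omega_\theta,h}}(x+t\omega_\theta)$ for $|t|\le1$, while $|p_{\theta,h}|^2$ is bounded by a Schwartz plate kernel.
\item Approximate the densities $G_\theta$ by point masses $\sum_j|a_{\theta,j}|^2\delta_{z_{\theta,j}}$.
\item Apply \eqref{LS} to the randomized sum $\sum_{\theta,j}\varepsilon_{\theta,j}a_{\theta,j}p_{\theta,h}(\cdot-z_{\theta,j})$, and use Khintchine on both sides. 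This gives $\bigl\|\sum|a_{\theta,j}|^2|U(t)p_{\theta,h}(\cdot-z_{\theta,j})|^2\bigr\|_{3/2}\lesssim_\epsilon h^{-\epsilon}\bigl\|\sum|a_{\theta,j}|^2|p_{\theta,h}(\cdot-z_{\theta,j})|^2\bigr\|_{3/2}$.
\item The left side dominates the transported plate averages. The right side is controlled by the dual of \eqref{spatialmax}.
\item Integrating over time origins gives \eqref{avg4}.
\end{enumerate}
In this route \eqref{LS} is only ever applied at frequency $h^{-1}$. The low-frequency content of the positive plate kernel is carried automatically by $|U(t)p_{\theta,h}|^2$, which is exactly what your linear frequency decomposition of $g$ cannot do.
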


\begin{proof}
Choose $L^2$-normalized initial packets $p_{\theta,h}$ with Fourier support in a small box around $h^{-1}\omega_\theta$, of radial width $c h^{-1}$ and transverse widths $c h^{-1/2}$. The fixed constant $c>0$ is chosen sufficiently small. Their Fourier profiles can be chosen nonnegative. Taylor expansion of $|\xi|$ around the central ray gives
\begin{align}
 |U(t)p_{\theta,h}(x)|^2&\geq c_0h^{-2}\1_{B_{\omega_\theta,h}}(x+t\omega_\theta),
       &&|t|\leq1,\label{packetlower}\\
 |p_{\theta,h}(x)|^2&\leq C_Mh^{-2}
 (1+|x_\perp|/\sqrt h+|x_\parallel|/h)^{-M}.
       \label{packetupper}
\end{align}
Indeed the nonlinear phase over this box is $O(c^2)$ for $|t|\leq1$, and on the displayed spatial core the remaining normalized linear phase is $O(c)$. The integral therefore stays in a fixed open half-plane and has modulus bounded below by a constant times $h^{-1}$. Integration by parts at time zero gives \eqref{packetupper}.

Apply \eqref{LS} to finite randomized sums
\[
 g=\sum_{\theta,j}\varepsilon_{\theta,j}a_{\theta,j}
           p_{\theta,h}(\,\cdot-z_{\theta,j}).
\]
Khintchine's inequality and integration give
\begin{equation}\label{randomLS}
 \left\|\sum_{\theta,j}|a_{\theta,j}|^2
       |U(t)p_{\theta,h}(x-z_{\theta,j})|^2\right\|_{L^{3/2}_{x,t}}
 \lesssim_\epsilon h^{-\epsilon}
 \left\|\sum_{\theta,j}|a_{\theta,j}|^2
       |p_{\theta,h}(x-z_{\theta,j})|^2\right\|_{L^{3/2}_x}.
\end{equation}
The arbitrary small loss has been renamed after squaring. Approximate nonnegative smooth compactly supported densities by finite point masses and then use monotone approximation and density. Combining \eqref{packetlower}, \eqref{packetupper}, and the dual of \eqref{spatialmax} proves
\begin{equation}\label{transport}
 \left\|\sum_\theta (A_{\theta,h}*g_\theta)(x+t\omega_\theta)
       \right\|_{L^{3/2}(\R^3\times[-1,1])}
 \lesssim_\epsilon h^{-\epsilon}\left\|\sum_\theta g_\theta\right\|_{3/2},
\end{equation}
where $A_{\theta,h}=|B_{\omega_\theta,h}|^{-1}\1_{B_{\omega_\theta,h}}$. The duality used here is explicit: for $a\geq0$,
\[
 \int a\sum_\theta B_\theta*g_\theta
 \leq\int\left(\sum_\theta g_\theta\right)
               \sup_\theta (\widetilde B_\theta*a),
\]
where $B_\theta$ is the Schwartz kernel in \eqref{packetupper}.

To pass from \eqref{transport} to spacetime convolution, write
\[
 \sum_\theta K_{\theta,h}*G_\theta(x,t)
 \asymp\int_{|t-s|\leq1}\sum_\theta
 (A_{\theta,h}*G_\theta(\cdot,s))(x+(t-s)\omega_\theta)\,ds.
\]
On $t\in[m,m+1]$, only $s\in[m-1,m+2]$ occurs. Minkowski, \eqref{transport}, and H\"older in $s$ bound the $L^{3/2}$ norm on this slab by
\[
 C_\epsilon h^{-\epsilon}
 \left\|\sum_\theta G_\theta\right\|_{L^{3/2}(\R^3\times[m-1,m+2])}.
\]
Raise to the power $3/2$ and sum in $m$. These enlarged time intervals have bounded overlap. This proves \eqref{avg4}. Dyadic dilation proves the Schwartz-kernel version. Finally duality against arbitrary nonnegative $G_\theta$ gives the maximal formulation for a finite direction family; approximation gives the full supremum. Uncentred plates containing the evaluation point are contained in fixed centred enlargements.
\end{proof}

\section{Packets and their concentration}\label{packets}
We now work in the null chart of Section~\ref{nullcoordinates}, with the
propagator \eqref{nullprop}. An initial packet has two spatial widths
$\sqrt h$ and one width $h$. During a unit time interval it follows a null
ray, giving a physical plank of volume comparable to $h^2$.

Put $\delta=\sqrt h$. At unit time length, choose angular labels separated at scale $\delta$. For each label $A$, choose a spatial lattice of volume $h^2$ in the affine coordinates $(b,y-2A\cdot b)$, with side lengths $\delta,\delta,h$. A packet label is $a=(A,B,Y)$, and an $L^2$-normalized packet is
\begin{equation}\label{packetdef}
 p_a(b,y,t)=c_h\int b_0\big(\sqrt h(\eta-2\rho A),h\rho\big)
 e^{i((b-B)\cdot\eta-(y-Y)\rho-t|\eta|^2/(4\rho))}\,d\eta\,d\rho,
\end{equation}
where $c_h\asymp h$ and $b_0$ is a fixed smooth compactly supported profile with its second argument positive. Fixed finite collections of such profiles and fixed oversampling are harmless.

The core plate of this packet is
\begin{equation}\label{nullplate}
 \begin{split}
 P_a=\{(b,y,t):&\ 0\leq t\leq1,\quad |b-B-tA|\leq C\delta,\\
 &|y-Y-2A\cdot(b-B)+t|A|^2|\leq Ch\}.
 \end{split}
\end{equation}
It has volume comparable to $h^2$. Its spatial centre follows the ray
$(B+tA,Y+t|A|^2)$. The factor $c_h\asymp h$ in \eqref{packetdef} follows from Plancherel: the spatial frequency box has volume comparable to $h^{-2}$. Consequently the packet is bounded by $C h^{-1}$ on its core and has rapid decay in the normalized coordinates of \eqref{nullplate}. For finite coefficients $d_a$, define
\begin{equation}\label{density}
 e(d)=\sum_a|d_a|^2,\qquad
 D_d=h^{-2}\sum_a|d_a|^2\1_{P_a}.
\end{equation}

Use the tests of Section~\ref{preliminaries} at $T=0$ and $\ell=1$,
so that their uncertainty condition is $rf\geq\delta=\sqrt h$. For a
finite coefficient array define its \emph{centre capacity} by
\begin{equation}\label{capacity}
 \kappa_\gamma(d)=\sup_E\frac{\sum_{a\in E}|d_a|^2}{w(E)}.
\end{equation}
Here $a\in E$ means that the initial centre $(A,B,Y)$ satisfies the test;
$w(E)$ is given by \eqref{testweight}. Thus capacity measures the largest
packet energy per allowed test weight. It is a quantity attached to the
positive array $|d_a|^2$, not yet to the possibly cancelling wave
$\sum_a d_ap_a$. Comparing these two objects is a separate step.

\section{Swept volume and decomposition by capacity}
\begin{lemma}[Swept volume]\label{volume}
For any test $E$ above,
\begin{equation}\label{swept}
 \left|\bigcup_{a\in E}P_a\right|\leq C r^4f\leq Cw(E).
\end{equation}
The constant is independent of $r,f,h$ and of the centre of the test.
\end{lemma}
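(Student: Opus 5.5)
The plan is to pass to the normalized coordinates \eqref{testcoordinates} with $T=0$, $\ell=1$, so that $u=t$. In these coordinates the union of plates should sit inside a bounded box in $(u,x,z)$. In the hyperquadric case it should also sit inside a sublevel set $\{|F|\leq Cf\}$. The map $(b,y,t)\mapsto(x,z,u)$ has Jacobian $r^{-2}\cdot r^{-2}$ at fixed $u$ (the shears do not change volume). Hence a normalized volume bound $Cf$ becomes $Cr^4f$, and since $f\leq1$ and $0<\gamma<1$ we have $f\leq f^{1-\gamma}$, which gives the second inequality in \eqref{swept}.

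\emph{Step 1: rays in normalized coordinates.} Write $A=A_*+rV$ and put $x_0,z_0$ for the normalized initial centre of $a=(A,B,Y)$. Substituting $b=B+tA$ and $y=Y+t|A|^2$ into \eqref{testcoordinates} gives $x(u)=x_0+uV$ and $z(u)=z_0+u|V|^2$; the cross terms $2tA_*\cdot rV$ cancel. For a centre satisfying the round bounds, the whole ray over $u\in[0,1]$ therefore stays in $\{|x|\leq2,\ |z|\leq2\}$.

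\emph{Step 2: the plate around the ray.} The plate \eqref{nullplate} has half-width $C\delta$ in $b$, which is $C\delta/r$ in $x$. Its thickness $Ch$ is measured in the sheared variable $y-2A\cdot(b-B)$. Replacing $A$ by $A_*$ in that shear costs $2r|V|\cdot C\delta$, so in $z$ the plate has width $O(h/r^2+\delta/r)$. The uncertainty condition $rf\geq\delta$ gives $\delta/r\leq f\leq1$, and hence also $h/r^2=(\delta/r)^2\leq f$. So every plate lies within normalized distance $Cf\leq C$ of its ray. In the round case ($f=1$) the union lies in a fixed box, of original volume $\lesssim r^4$, which proves \eqref{swept}.

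\emph{Step 3: the hyperquadric case.} Along a ray,
\[
uz-|x|^2=uz_0-|x_0|^2-2u\,x_0\cdot V,
\]
because the $u^2|V|^2$ terms cancel. Thus $F$ is affine in $u$ along each ray, with constant slope $G$. For $u\in[0,1]$ this gives $|F|\leq|F(0)|+|G(0)|\leq2f$ on the ray. The gradient of $F$ is bounded on the box by the coefficient bounds in \eqref{discriminant}. Combined with Step 2, the union of plates therefore lies in $\{|F|\leq Cf\}\cap\{|u|,|x|,|z|\leq C\}$.

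\emph{Main obstacle: the sublevel volume.} A general quadratic would only give sublevel volume $f^{1/2}$, so the real content is getting volume $O(f)$. The discriminant normalization should supply exactly this. Expanding directly, I expect the identity
\[
|\nabla_xF|^2-4\,\partial_uF\,\partial_zF+4kF=|\alpha|^2-4\beta d+4kc=1 .
\]
If $f$ exceeds a small fixed constant $c_1$, the box bound already gives $Cr^4\leq C'r^4f$. Otherwise, on $\{|F|\leq Cf\}$ the identity with $|k|\leq100$ forces $|\nabla F|\gtrsim1$. The coarea formula on the bounded box, using the boundedness of the level surfaces of a quadratic with bounded coefficients there, then gives normalized volume $\lesssim f$. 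All constants depend only on the coefficient bound $100$, so they are uniform in $r$, $f$, $h$ and the centre.
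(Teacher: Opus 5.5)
Your proof is correct and follows the paper's argument: normalized coordinates, $F$ affine along rays with slope $G$, packet thickness absorbed through $rf\geq\delta$, a discriminant identity forcing a gradient lower bound when $f$ is small, and the Jacobian $r^4$. The differences are minor. Your identity $|\nabla_xF|^2-4\partial_uF\,\partial_zF+4kF=1$ does hold; it is the paper's $|\nabla_VG|^2=1+4(\beta+kt)G-4kF$ with $G$ eliminated, so off the ray you only need $|F|\lesssim f$, not $|G|\lesssim f$. You also use coarea where the paper uses Fubini with the monotone pieces of a quadratic. That argument, or Crofton's formula, is what supplies the uniform bound on the level-set areas that you only assert.
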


\begin{proof}
In normalized coordinates the rays are
$x(t)=x_0+tV$, $z(t)=z_0+t|V|^2$. Direct differentiation gives
\begin{equation}\label{identities}
 L_V^2F=0,\qquad
 |\nabla_VG|^2=1+4(\beta+kt)G-4kF,\qquad
 \nabla_VG=\nabla_xF+2V\partial_zF.
\end{equation}
Thus $G$ is constant on each ray and $F(t)=F(0)+tG(0)$. On a selected ray, $|F(t)|\leq2f$, $|G(t)|\leq f$, and $x,z,V$ remain bounded for $0\leq t\leq1$.

For $f$ below a sufficiently small fixed constant, \eqref{identities} implies
$|\nabla_xF|+|\partial_zF|\geq c_0>0$. At each fixed $t$, the set in a fixed bounded $(x,z)$ box where $|F|\leq2f$ and this gradient lower bound holds has volume $O(f)$. To see this without any regularity assumption on the boundary, partition by a coordinate derivative of magnitude at least a fixed constant. Along the corresponding coordinate, a quadratic has at most two monotonic pieces; the preimage of an interval of length $O(f)$ on each piece has length $O(f)$. Fubini proves the assertion. For larger $f$ the bounded box has volume $O(1)=O(f)$.

Packet thickness preserves this argument. A core displacement has the form
\[
 \Delta x=O(\delta/r),\qquad
 \Delta z=2V\cdot\Delta x+O(h/r^2).
\]
Since $rf\geq\delta$, these perturbations change $F$ and $G$ by $O(f)$; the gradient lower bound survives when $f$ is small enough. The normalized spatial volume is consequently $O(f)$. The spatial Jacobian is $r^4$, and the time interval has length one. This proves \eqref{swept}. The round case follows directly from the bounded normalized box.
\end{proof}

The following proposition is an adaptation of Lemma 4.2 of \cite{Diagonal}.

\begin{proposition}[Finite-array capacity decomposition]\label{decomp}
Every nonzero finite array $d$, with $n=\#\supp d$, has a decomposition into disjoint coordinate subarrays
\[
 d=\sum_{i=1}^J d^{(i)},\qquad J\leq C\log(2+n),
\]
such that
\begin{equation}\label{capacitysum}
 \sum_i e(d^{(i)})\kappa_\gamma(d^{(i)})^{1/2}
 \leq C\int_{\R^2\times\R\times[0,1]}D_d^{3/2}.
\end{equation}
\end{proposition}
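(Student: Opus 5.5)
We will obtain the decomposition by a greedy extraction of the most concentrated tests, in the spirit of the dyadic pigeonholing behind Lemma 4.2 of \cite{Diagonal}. The only geometric input will be the swept volume bound of Lemma~\ref{volume}, which converts concentration on a test into a lower bound for $\int D_d^{3/2}$.

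First we sort the packet energies dyadically. Since only the relative size of the $|d_a|^2$ matters, we discard coefficients with $|d_a|^2\leq n^{-10}e(d)$. Their total energy is at most $n^{-9}e(d)$, and each of them has capacity at most $\kappa_\gamma(d)$, since capacity is monotone under restriction to subarrays. We will absorb them at the end into the subarray of largest capacity. There remain at most $C\log(2+n)$ dyadic classes $\{a:|d_a|^2\sim\lambda\}$.

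Next we define a capacity layering. Set $\kappa_0=\kappa_\gamma(d)$ and consider dyadic levels $\kappa_0 2^{-i}$. At level $i$ we greedily select tests $E$ satisfying $\sum_{a\in E,\,a\ \mathrm{unused}}|d_a|^2\geq\kappa_02^{-i-1}w(E)$, and we remove their packets into $d^{(i)}$. When no further test qualifies, the remaining array has capacity below $\kappa_02^{-i-1}$. By construction each $d^{(i)}$ then satisfies $\kappa_\gamma(d^{(i)})\leq\kappa_02^{-i}$. The levels can stop once $\kappa_02^{-i}\lesssim n^{-10}e(d)$: beyond that point the remaining energy is negligible, because a single packet lies in a round test of bounded weight. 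This keeps $J\lesssim\log(2+n)$. Here we may need to combine the energy and capacity pigeonholes, or to note that $e/\kappa_0$ is polynomially bounded in $n$.

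The main step is the estimate of $\int D_d^{3/2}$ from below by $\sum_i e(d^{(i)})(\kappa_02^{-i})^{1/2}$. Fix a selected test $E$ at level $i$ with extracted mass $m_E\geq\kappa_02^{-i-1}w(E)$. On $\Omega_E=\bigcup_{a\in E}P_a$ we have $D_d\geq h^{-2}\sum_{a\in E}|d_a|^2\1_{P_a}$. Hölder's inequality together with Lemma~\ref{volume} then gives
\[
\int_{\Omega_E}D_d^{3/2}\geq |\Omega_E|^{-1/2}\Bigl(\int_{\Omega_E}D_d\Bigr)^{3/2}\gtrsim w(E)^{-1/2}m_E^{3/2}\gtrsim m_E\,(\kappa_02^{-i})^{1/2},
\]
using $\int h^{-2}\1_{P_a}\asymp1$. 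Summing over $E$ gives the right side of \eqref{capacitysum}, provided the sets $\Omega_E$ overlap boundedly in the sense of the $D^{3/2}$ integrand.

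We expect this overlap issue to be the main obstacle, since distinct selected tests may sweep the same spacetime region. We would handle it as follows. We refine the greedy choice so that at each level we pick tests of a single dyadic type $(r,f)$, at the cost of $O(\log^2 h^{-1})$ types; this is acceptable only if $\log h^{-1}\lesssim\log n$, which we would arrange by noting that a nontrivial array has $n\gtrsim$ a power of $h^{-1}$, or otherwise argue directly. Alternatively, and preferably, we avoid overlap counting altogether with the superadditive form. We write $D_d\geq D_{d^{(i)}}$ on each layer and then use $\int D^{3/2}\geq\sum_i\int_{S_i}D^{3/2}$, where $S_i$ is the set on which layer $i$ contributes at least a fraction $1/J$ of $D_d$. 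Levels whose contribution is lost there have geometrically smaller capacities, so their total is dominated by the top surviving level. If this accounting fails, we would settle for a version with the $\log(2+n)$ factor permitted in the constant, since the later application tolerates $h^{-\varepsilon}$ losses.
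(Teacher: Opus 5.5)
Your skeleton is the paper's: repeatedly remove a test carrying at least half the current capacity, group the removed bundles dyadically by capacity level, use monotonicity of $\kappa_\gamma$ under restriction, and use H\"older with Lemma~\ref{volume} to get $m_E^{3/2}w(E)^{-1/2}$ from one bundle. The step you flag as the main obstacle, however, is a genuine gap, and none of your remedies closes it.

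\begin{itemize}
\item Fixing one dyadic type $(r,f)$ per level does not stop plates of different directions from crossing. So the sets $\Omega_E$ can still overlap with unbounded multiplicity.
\item The $S_i$ argument fails. A level with smaller capacity can carry much more energy, so its cost $e(d^{(i)})(\kappa_02^{-i})^{1/2}$ is not dominated by the top surviving level.
\item The fallback with a logarithmic loss is not the statement, and it is not proved either.
\end{itemize}

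The missing idea is to apply H\"older to the bundle's own density $D_{q_E}=h^{-2}\sum_{a\in q_E}|d_a|^2\1_{P_a}$, not to $D_d$ on $\Omega_E$. This density is supported in $\Omega_E$ and satisfies $\int D_{q_E}\asymp m_E$, so $m_E^{3/2}w(E)^{-1/2}\lesssim\int D_{q_E}^{3/2}$. The bundles are disjoint subarrays and $D$ is additive and nonnegative in $|d_a|^2$. Pointwise superadditivity of $u\mapsto u^{3/2}$ then gives $\sum_E D_{q_E}^{3/2}\leq D_d^{3/2}$. This controls the sum over all bundles at all levels with no overlap counting at all. You name superadditivity, but you apply it at the wrong granularity.

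Your stopping rule also needs repair.

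\begin{itemize}
\item A single centre lies in a round test with $r\sim\delta$, whose weight is $O(h^2)$, not bounded. With $D=\max_a|d_a|^2$ this gives $cD/h^2\leq\kappa_\gamma(d)\leq nD/h^2$.
\item Your threshold $n^{-10}e(d)$ misses the factor $h^{-2}$ relating capacities to energies. It therefore yields $J\lesssim\log n+\log h^{-1}$ rather than $O(\log(2+n))$.
\item The paper stops at $D/(h^2n^2)$, which gives $O(\log(2+n))$ levels.
\item The final remainder $q$ is then bounded directly: $e(q)\kappa_\gamma(q)^{1/2}\leq nD\cdot D^{1/2}/(hn)=D^{3/2}/h$.
\item This is $\lesssim\int D_d^{3/2}$, using the largest coefficient on its own plate of volume $\asymp h^2$.
\end{itemize}

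With these two fixes, your preliminary discarding of small coefficients and the dyadic energy classes become unnecessary.
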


\begin{proof}
The uncertainty condition gives $w(E)\geq h^2$. A round test with $r=C\delta$ captures any one centre and has weight $O(h^2)$. Writing $D=\max_a|d_a|^2$, we have
\[
 cD/h^2\leq\kappa_\gamma(d)\leq nD/h^2.
\]
Set $A=D/(h^2n^2)$. Any coordinate remainder $q$ with capacity at most $A$ satisfies
\[
 e(q)\sqrt{\kappa_\gamma(q)}\leq D^{3/2}/h
 \lesssim\int D_d^{3/2},
\]
using the contribution of a largest original coefficient on its own core plate.

Starting with $d$, whenever the remaining capacity exceeds $A$, choose a test realizing at least half its supremum, and remove all remaining coordinates in that test. If the removed bundle is $q_j$, put
$\rho_j=e(q_j)/w(E_j)$. Then $\rho_j\geq\kappa_\gamma(\text{remainder})/2$. This procedure terminates after at most $n$ steps. Group the removed bundles according to the dyadic interval containing the capacity of the remainder from which they were removed. There are $O(\log(2+n))$ nonempty groups. Monotonicity of capacity under coordinate restriction shows that the cost of one group is at most
\[
 C\sum_{j\text{ in group}}e(q_j)\sqrt{\rho_j}.
\]
By \eqref{swept} and H\"older,
\[
 e(q_j)\sqrt{\rho_j}
 =\frac{e(q_j)^{3/2}}{\sqrt{w(E_j)}}
 \lesssim\int D_{q_j}^{3/2}.
\]
The bundles are disjoint and $D_d=\sum_jD_{q_j}+D_{\mathrm{remainder}}$. Since $u\mapsto u^{3/2}$ is superadditive on nonnegative numbers, summing proves \eqref{capacitysum}, including the remainder as one further group.
\end{proof}

\section{From centre capacity to wave regularity}\label{sourcematch}
We next derive the packet estimate supplied by the propagation input. This is the second use of the local-smoothing article.

\begin{proposition}[Regularity-controlled synthesis]\label{wavehyp}
Assume External inputs~\ref{enlargedinput} and~\ref{propinput}.
Fix a packet system \eqref{packetdef}. On every fixed polynomial complexity range for the label support, for every $0<\gamma<1$ and $\zeta>0$,
\begin{equation}\label{waveinput}
 \left\|\sum_a d_ap_a\right\|_{L^3(\R^3\times[0,1])}^3
 \leq C_{\gamma,\zeta}h^{-100\gamma-\zeta}
               e(d)\sqrt{\kappa_\gamma(d)}.
\end{equation}
The bound is uniform in coefficient sizes and in coordinate restrictions, with the fixed phase, window, and complexity constants held fixed.
\end{proposition}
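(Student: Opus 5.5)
We apply External input~\ref{propinput} to the initial datum $v_0=\sum_ad_ap_a(\cdot,\cdot,0)$ and then bound the left side of \eqref{waveinput} by the functional $h^{-1/2}\sum_{j,Q}m_{j,Q}^{3/2}$. This takes three steps. First, we show that the initial row measure $|V_1v_0|^2d\nu_1$ is regular with constant $\lesssim_N h^{-O(\gamma)}\kappa_\gamma(d)$ plus negligible tails. Second, we check the uniform polynomial family hypotheses. Third, we dominate $\|\sum_ad_ap_a\|_{L^3(\R^3\times[0,1])}^3$ by $h^{-1/2}\sum_{j,Q}m_{j,Q}^{3/2}$, up to rapidly decaying errors. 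With $e=\|v_0\|_2^2$, the almost-orthogonality of the packets gives $e\lesssim e(d)$, since fixed oversampling yields a frame bound. Inserting this into \eqref{propquoted} gives \eqref{waveinput}.

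\emph{Step 1: from centre capacity to row regularity.} At $\ell=1$ the analyzer $V_1$ has aperture $d_1=\sqrt h=\delta$, matching the packet angular scale. So $V_1p_a$ is concentrated, with rapid decay, at rows $(A',b,y)$ with $|A'-A|\lesssim\delta$ and $(b,y)$ in the initial core of $p_a$. Fix a test $E$ and dyadically decompose the labels by their normalized distance $2^k$ to $E$, measured in the coordinates \eqref{testcoordinates} and in the values of $F,G$ rescaled by $f$. The packets at distance $\lesssim2^k$ from $E$ have centres in an enlargement of $E$ by powers of $P=2^k$. Because $rf\geq\delta$, packet thickness perturbs $F,G$ by $O(f)$, exactly as in the proof of Lemma~\ref{volume}. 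External input~\ref{enlargedinput} covers this enlargement by admissible tests of total weight $\lesssim2^{Ck}w(E)$. By the definition \eqref{capacity}, the packets there carry energy $\lesssim2^{Ck}\kappa_\gamma(d)w(E)$. Their contribution to $\mu(E)$ is damped by the decay $2^{-kN}$. The cross terms in $|V_1\sum_a d_ap_a|^2$ are handled by Cauchy--Schwarz against the same rapid decay, using the finite overlap of packets in each local window. Summing in $k$ gives $\mu(E)\lesssim\kappa_\gamma(d)w(E)$. Any loss here would be a fixed constant, not a power of $h$.

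\emph{Step 2: family hypotheses.} On a fixed polynomial complexity range the label support, and hence the phase box, is polynomially bounded. The packets have Schwartz tails, so the row energy has rapid tails outside a polynomial box. The row density relative to $e$ is polynomially bounded because each packet row density is $\lesssim h^{-O(1)}$. The ratio $\kappa/e$ is polynomially bounded in both directions: we have $w(E)\geq h^2$, and by the bound in the proof of Proposition~\ref{decomp}, a single largest packet gives $\kappa\gtrsim D/h^2$. Coordinate restrictions and coefficient sizes are handled by homogeneity: both sides of \eqref{waveinput} scale like $|d|^3$. If some ratio falls outside the polynomial range, we first discard the negligible coefficients. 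This discard is the main obstacle. If $\kappa/e$ is much larger than $h^{-B}$, the dominant packets are nearly isolated, and \eqref{waveinput} follows trivially from $\|p_a\|_3^3\lesssim h^{-1}$ together with $e\sqrt\kappa\geq D^{3/2}/h$. We would treat this case separately before invoking the input.

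\emph{Step 3: $L^3$ by the terminal functional.} Divide $[0,1]$ into intervals $[jh,(j+1)h]$ and space into $h$-cubes $Q$. On $Q\times[jh,(j+1)h]$ the wave $U_0(t)v_0$ is, by the uncertainty principle at frequency $h^{-1}$ with $d_h=1$, essentially constant at scale $h$ modulo rapid tails. Bernstein's inequality, applied with a Schwartz majorant, then gives
\[\|U_0(t)v_0\|_{L^3(Q\times[jh,(j+1)h])}^3\lesssim h^{4}\,(h^{-3}\tilde m_{j,Q})^{3/2}=h^{-1/2}\tilde m_{j,Q}^{3/2}.\]
Here $\tilde m_{j,Q}$ is the $L^2$ mass of the wave on a neighbourhood of $Q$ weighted by rapid decay. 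By \eqref{isometry} at $\ell=h$, this mass equals a rapidly weighted sum of the $m_{j,Q'}$. Finally, we use Hölder's inequality to move the weights outside the power $3/2$ and sum over $j$ and $Q$, which finishes the proof.
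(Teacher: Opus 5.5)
Your Steps 1 and 3 follow the paper's route. You use the atom bound for $V_1p_a$, weighted Cauchy--Schwarz for the cross terms, dyadic enlargement of the test together with the covering of External input~\ref{enlargedinput} to get $\int_E|V_1v_0|^2\,d\nu\lesssim\kappa_\gamma(d)w(E)$, and a localized reproduction at length $h$ to bound the $L^3$ norm by $h^{-1/2}\sum_{j,Q}m_{j,Q}^{3/2}$. One correction in Step 3: the isometry alone does not localize mass. What you need is $V_h^*V_h=I$ on the band together with the kernel bound $C_Lh^{-3}(1+|z-z'|/h)^{-L}$ for $U_0(t-jh)V_h^*$, and this gives an inequality, not an equality.

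\textbf{The gap is in Step 2.}
\begin{itemize}
\item The admissibility hypotheses of External input~\ref{propinput} are relative to the actual energy $e=\|v_0\|_2^2$, and you must take $\kappa=C\kappa_\gamma(d)$ to reach the right side of \eqref{waveinput}.
\item Your bounds $w(E)\geq h^2$ and $\kappa_\gamma(d)\gtrsim D/h^2$ control $\kappa_\gamma(d)/e(d)$, not $\kappa/e$.
\item Packets overlap (fixed oversampling, neighbouring angular labels). Destructive interference can therefore make $e$ far smaller than $e(d)$ even when all coefficients are comparable. Then $\kappa/e$, the row density relative to $e$, and the tails (which are only $O(h^Ne(d))$) all escape polynomial control.
\end{itemize}
Your proposed remedy does not close this. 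Large $\kappa/e$ signals cancellation, not isolated packets. Discarding small coefficients does not help. The triangle inequality with $\|p_a\|_3^3\lesssim h^{-1}$ only gives $n^3D^{3/2}/h$, which loses a power of $h$ when $n\sim h^{-B}$. Also, $e\sqrt\kappa\geq D^{3/2}/h$ holds with $e(d)$ in place of $e$, not with $e$ itself.

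\textbf{The missing idea is to use the smallness of $e$ directly.} On the frequency band, which has volume $\sim h^{-3}$, one has $\|U_0(t)v_0\|_\infty\lesssim h^{-3/2}\|v_0\|_2$. Hence $\int_0^1\int|U_0(t)v_0|^3\lesssim h^{-3/2}e^{3/2}$.
\begin{itemize}
\item If $e<h^Me(d)$, this is at most $h^{(3M-3)/2}e(d)^{3/2}$. Since $\kappa_\gamma(d)\geq ce(d)/(nh^2)$ with $n\leq Ch^{-B}$, it is $\lesssim e(d)\sqrt{\kappa_\gamma(d)}$ once $M>B+10$.
\item If $e\geq h^Me(d)$, every ratio, density and tail bound you established relative to $e(d)$ transfers to $e$ with a fixed polynomial loss. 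The propagation input then applies with $\kappa=C\kappa_\gamma(d)$.
\end{itemize}
This is exactly the case split the paper makes; with it, the rest of your argument goes through.
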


In the round-cone theorem the regularity condition is imposed on the \emph{analyzed wave state}, while \eqref{capacity} concerns packet centres. We now compare these two quantities.

\begin{proof}
Let $V_1$ be the plateau analyzer of \cite[Definition 8.1]{OAI}, and put $v_0=\sum_a d_ap_a(\cdot,0)$. We first prove, uniformly over all the stated tests,
\begin{equation}\label{checkpoint}
 \int_E|V_1v_0|^2\,d\nu
 \leq C\kappa_\gamma(d)w(E),
 \qquad \|v_0\|_2^2\leq C e(d),
\end{equation}
with negligible tails and fixed polynomial bounds, also after restricting the coefficient array. The first inequality requires an estimate for the analyzer applied to the entire synthesized state.

Use the analyzer \eqref{analyzer} with $\ell=1$ and write $d\nu=d\nu_1$.
It is isometric by \eqref{isometry}. For the packet with label $a=(A,B,Y)$, make the change $\eta=2\rho A+h^{-1/2}\zeta$, $\rho=h^{-1}s$ in its analyzed Fourier integral. The additional angular factor becomes
$\psi(\zeta/(2s)-(A'-A)/\delta)$. It vanishes unless $|A'-A|\leq C\delta$. Its derivatives in $(\zeta,s)$ are uniformly bounded there. Integration by parts gives, for every $L$,
\begin{equation}\label{analysisatom}
 |V_1p_a(A',b,y)|\leq C_Lh^{-1}\1_{|A'-A|\leq C\delta}
 \left(1+\frac{|b-B|}{\delta}
 +\frac{|y-Y-2A\cdot(b-B)|}{h}\right)^{-L}.
\end{equation}
Denote the indicator times the decreasing weight on the right by $W_a$. Taking its exponent sufficiently large, the packet lattice gives $\sum_aW_a\leq C$: only a bounded number of angular labels are relevant to a fixed $A'$, and for each such label the spatial lattice sum is bounded in its own affine coordinates. Weighted Cauchy--Schwarz consequently gives
\[
 |V_1v_0|^2\leq C_L h^{-2}\sum_a|d_a|^2W_a.
\]
Since $\int W_a\,d\nu\lesssim h^2$, integration over all row space proves the energy bound in \eqref{checkpoint}.

For integration over a test $E$, dyadically sort the packet centres by their distance from $E$ in their angular and tilted spatial packet coordinates. The normalized tails of $h^{-2}W_a\,d\nu$ decrease faster than any prescribed power. Hence
\begin{equation}\label{besselcheck}
 \int_E\left|\sum_a d_a V_1p_a\right|^2d\nu
 \lesssim_L\sum_{j\geq0}2^{-jL}
                  \sum_{a\in E^{(2^j)}}|d_a|^2,
\end{equation}
where $E^{(P)}$ enlarges a test by $P$ packet widths in position and angle. To justify the use of the external enlarged-test lemma on this set, write the perturbations in normalized test coordinates. They satisfy
\[
 |\Delta V|+|\Delta x|\lesssim P\delta/r,\qquad
 |\Delta z-2V\cdot\Delta x|\lesssim P^2h/r^2.
\]
Because $\delta/r\leq f$, substitution in the explicit quadratic polynomials $F,G$ shows that the enlarged set has normalized round bounds at most $P^C$ and sublevel bounds at most $P^Cf$. The discriminant of the defining polynomial is still one. The covering argument of \cite[Lemma 8.4]{OAI} therefore covers it by admissible tests of total weight at most $CP^{C'}w(E)$. This covering argument is geometric and applies to the positive atomic measure $\sum_a|d_a|^2\delta_a$, as well as to analyzed row measures. Thus
\[
 \sum_{a\in E^{(P)}}|d_a|^2\leq CP^{C'}\kappa_\gamma(d)w(E).
\]
Choose $L>C'$ in \eqref{besselcheck}. This proves \eqref{checkpoint}. In particular, the mixed angular--transverse normal displacement is accounted for at scale $h$; no isotropic enlargement has been substituted for it.

At terminal length $h$, assign analyzed row masses $m_{j,Q}$ to spatial $h$-cubes. The assignments are admissible round tests with weights comparable to one: the angular chart is bounded and a fixed round width contains all its angles. The assignments are disjoint in row space at each time. External input~\ref{propinput} gives
\[
 h^{-1/2}\sum_{j,Q}m_{j,Q}^{3/2}
 \lesssim h^{-100\gamma-\zeta}\|v_0\|_2^2\sqrt{\kappa},
\qquad \kappa=C\kappa_\gamma(d).
\]
Here is a direct terminal synthesis bound, which does not require that $v_0$ originate from the particular physical input in Section 10 of the source. On $[jh,(j+1)h]$, the kernel of $U_0(t-jh)V_h^*$ is bounded by
\[
 C_Lh^{-3}(1+|z-z'|/h)^{-L},
\]
uniformly over the bounded terminal angular variables. This follows by scaling the compact frequency band by $h$; the elapsed time is at most $h$. Weighted Cauchy--Schwarz gives, for $z\in Q$,
\[
 |U_0(t)v_0(z)|^2\lesssim h^{-3}
 \sum_{Q'}(1+\operatorname{dist}(Q,Q')/h)^{-L}m_{j,Q'}.
\]
The spacetime cell volume is $h^4$. Raise to $3/2$, integrate, and use the summability of the discrete kernel to obtain
\[
 \int_0^1\int |U_0(t)v_0|^3
 \lesssim h^{-1/2}\sum_{j,Q}m_{j,Q}^{3/2}.
\]
The exponent theorem initially applies after a fixed polynomial truncation. Packet decay makes the omitted tails negligible to any prescribed power; the truncation exponent is fixed before invoking that theorem.

We check its other admissibility conditions. For $n\leq Ch^{-B}$ centres in a fixed polynomial position range, \eqref{analysisatom} gives polynomial row-density bounds and negligible phase tails relative to $e(d)$. Also $\kappa_\gamma(d)\leq e(d)/h^2$ and $\kappa_\gamma(d)\geq c e(d)/(nh^2)$. If the actual wave energy $e_0$ is at least $h^M e(d)$ for a fixed $M$, all required ratios and tails relative to $e_0$ have fixed polynomial bounds. If $e_0<h^M e(d)$, the elementary band estimate gives
\[
 \int_0^1\int|U_0(t)v_0|^3\lesssim h^{-3/2}e_0^{3/2}
 \leq h^{(3M-3)/2}e(d)^{3/2}.
\]
Taking $M>B+10$ makes this at most $C e(d)\sqrt{\kappa_\gamma(d)}$. Thus tiny synthesized energy is harmless; no lower bound on individual coefficients is imposed. All bounds are inherited by coordinate restrictions. Applying the round-cone exponent bound with these fixed setup constants proves \eqref{waveinput}, after renaming the arbitrary loss.
\end{proof}

\section{From regularity-controlled synthesis to packet density}
\begin{proposition}\label{packetdensity}
Under \eqref{waveinput}, on every fixed polynomial label range,
\begin{equation}\label{packetdensityeq}
 \left\|\sum_a d_ap_a\right\|_{L^3(\R^3\times[0,1])}
 \lesssim_\epsilon h^{-\epsilon}\|D_d^{1/2}\|_3.
\end{equation}
\end{proposition}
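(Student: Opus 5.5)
The plan is to combine the capacity decomposition of Proposition~\ref{decomp} with the regularity-controlled synthesis bound \eqref{waveinput}, applied to each piece separately. Fix $\gamma$ and $\zeta$ so small that $100\gamma+\zeta\leq\epsilon$. Given a finite array $d$ in the fixed polynomial label range, Proposition~\ref{decomp} gives disjoint coordinate subarrays $d^{(1)},\dots,d^{(J)}$ with $J\leq C\log(2+n)$ and
\[
 \sum_i e(d^{(i)})\kappa_\gamma(d^{(i)})^{1/2}\leq C\int D_d^{3/2}.
\]
Since $n\leq Ch^{-B}$ on the polynomial range, $J\lesssim_B\log(1/h)$. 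This is only a logarithmic loss and is absorbed into $h^{-\epsilon}$.

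Next, apply the triangle inequality in $L^3(\R^3\times[0,1])$ to $\sum_a d_ap_a=\sum_i\sum_a d^{(i)}_ap_a$. Use \eqref{waveinput} for each coordinate restriction; this is allowed because the proposition is stated to be uniform under restrictions and the restrictions stay in the same label range. This gives
\[
 \Bigl\|\sum_a d_ap_a\Bigr\|_3
 \leq\sum_{i=1}^J\bigl(C h^{-100\gamma-\zeta}e(d^{(i)})\kappa_\gamma(d^{(i)})^{1/2}\bigr)^{1/3}.
\]
Apply H\"older over the $J$ terms, with $\sum_i x_i^{1/3}\leq J^{2/3}(\sum_i x_i)^{1/3}$. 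Combined with \eqref{capacitysum}, this bounds the right side by
\[
 C J^{2/3}h^{-\epsilon/3}\Bigl(\int D_d^{3/2}\Bigr)^{1/3}=CJ^{2/3}h^{-\epsilon/3}\|D_d^{1/2}\|_3 .
\]
Since $J^{2/3}\lesssim(\log 1/h)^{2/3}\lesssim_\epsilon h^{-\epsilon/3}$, this proves \eqref{packetdensityeq} after renaming $\epsilon$. The integral $\int D_d^{3/2}$ over $\R^2\times\R\times[0,1]$ is exactly $\|D_d^{1/2}\|_{L^3}^3$ on the same slab, because the plates $P_a$ lie in $0\leq t\leq1$.

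Most of this is bookkeeping. The step that needs care is checking that each subarray $d^{(i)}$ really satisfies the hypotheses under which \eqref{waveinput} was proved, namely the fixed polynomial complexity constants. The subarrays inherit the label support of $d$, so the label count and position range are still bounded by $Ch^{-B}$. The capacity ratio bounds $ce/(nh^2)\leq\kappa_\gamma\leq e/h^2$ hold for any nonzero array. The proof of Proposition~\ref{wavehyp} already handled tiny synthesized energy without any lower bound on coefficients, so no piece needs to be discarded. Empty or zero subarrays contribute nothing. The constants therefore depend only on $\epsilon$, $B$ and the fixed packet system, which completes the plan.
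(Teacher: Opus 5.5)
Your proof is correct and follows the paper's argument: split $d$ via Proposition~\ref{decomp}, apply \eqref{waveinput} to each coordinate restriction, combine the pieces with the triangle inequality and H\"older over the $J\lesssim_B 1+\log h^{-1}$ subarrays, invoke \eqref{capacitysum}, and absorb the logarithm by taking $\gamma,\zeta$ small relative to $\epsilon$. The only difference is cosmetic: you sum the $L^3$ norms before cubing, while the paper cubes first to obtain $J^2\sum_i\|v_i\|_3^3$.
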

\begin{proof}
Use Proposition~\ref{decomp} and write $v_i=\sum_a d_a^{(i)}p_a$. The triangle inequality followed by H\"older gives
\[
 \left\|\sum_i v_i\right\|_3^3
 \leq J^2\sum_i\|v_i\|_3^3
 \lesssim J^2h^{-100\gamma-\zeta}
       \sum_i e(d^{(i)})\sqrt{\kappa_\gamma(d^{(i)})}.
\]
Apply \eqref{capacitysum}. In a fixed polynomial setup, $n\leq Ch^{-B}$, hence $J^2\lesssim_B(1+\log h^{-1})^2$. Choose $\gamma,\zeta$ sufficiently small relative to the prescribed $\epsilon$, and absorb the logarithm.
\end{proof}

One can remove the position-range restriction without allowing the complexity exponent to grow. Partition initial packet centres into fixed unit spatial cubes. Each cube contains at most a fixed power of $h^{-1}$ labels, and translation gives one fixed complexity range. On an output unit cube, only a bounded number of nearby input cubes contribute to the principal part because the velocities are bounded. Apply \eqref{packetdensityeq} to these arrays and sum cubes. For distant cubes, integration by parts in \eqref{packetdef} gives
\[
 \|v_k\|_{L^3(Q_j\times[0,1])}
 \leq C_Nh^N(1+|j-k|)^{-N}e(d^{(k)})^{1/2}.
\]
Discrete Young's inequality bounds the sum of these tails by
$C_Nh^N(\sum_ke(d^{(k)})^{3/2})^{1/3}$. The core plates from one input cube lie in a fixed-volume region, so H\"older gives
$e(d^{(k)})^{3/2}\lesssim\int D_{d^{(k)}}^{3/2}$. Positivity then controls the tails by the right side of \eqref{packetdensityeq}. This proves a bound uniform over arbitrary finite position supports. Soft plate weights may replace the cores by enlarging the right side.

\section{Thickened-cone functions and time origins}\label{thickened}
We record how the packet estimate acts on arbitrary functions in the thickened cone. This step is essential: these functions are not required to be free waves.

For a sector centred at $A_\theta$, use affine frequency coordinates
\[
 \zeta=\sqrt h(\eta-2\rho A_\theta),\qquad
 s=h\rho,\qquad
 q=\sigma+A_\theta\cdot\eta-|A_\theta|^2\rho.
\]
The sector lies in a fixed box in $(\zeta,s,q)$, with $s$ bounded away from zero, since
\begin{equation}\label{residual}
 -\frac{|\eta|^2}{4\rho}
       +A_\theta\cdot\eta-|A_\theta|^2\rho
 =-\frac{|\zeta|^2}{4s}.
\end{equation}
Choose a reproducing multiplier
\[
 m_\theta(\eta,\rho,\sigma)
 =b_\theta(\zeta,s)\chi\left(\sigma+\frac{|\eta|^2}{4\rho}\right)
\]
equal to one on the sector. Its inverse Fourier transform is a constant times
$h^{-1}\psi(t)p_\theta(b,y,t)$, where $\psi$ is Schwartz and $p_\theta$ is an $L^2$-normalized packet of the form \eqref{packetdef}.

Expand $\widehat f_\theta$ in a Fourier series in a fixed larger affine box, then multiply the series by $m_\theta$. The resulting identity is
\begin{equation}\label{frameexp}
 f_\theta=\sum_{v,k} a_{\theta,v,k}\,
       \psi(t-k)p_{\theta,v}(b,y,t-k),
\end{equation}
with fixed oversampling if needed. The spatial lattice has the side lengths $\delta,\delta,h$ in the tilted coordinates, and the time lattice has constant spacing. Fixed time spacing is denoted by one; the identical argument applies after a fixed rescaling. A time translate also shifts the centre along the central ray, which is included in the spatial label $v$. The Fourier coefficients are fixed constants times $h$ times samples of $f_\theta$ at the corresponding affine physical lattice. To check this normalization, the frequency box has volume comparable to $h^{-2}$, so its Fourier-series coefficient is a constant times $h^2 f_\theta$ at the dual lattice point. Multiplication by $m_\theta$ supplies the additional factor $h^{-1}$ in its inverse transform. The resulting coefficient is therefore a constant times $h f_\theta$ at that point. The physical lattice cell has volume comparable to $h^2$; its time spacing is fixed, and its spatial sides are $\delta,\delta,h$.

The use of Fourier series is useful here because the coefficients are computed from $f_\theta$ itself. Thus interference among its Fourier components is retained before any positive upper bound is taken.

For a sufficiently large fixed $M$, put
\begin{align*}
 X_{\theta,v,k}&=\frac{b-B-(t-k)A_\theta}{\delta},\\
 Z_{\theta,v,k}&=\frac{y-Y-2A_\theta\cdot(b-B)+(t-k)|A_\theta|^2}{h},\\
 W_{\theta,v,k}(b,y,t)&=
 (1+|X_{\theta,v,k}|+|Z_{\theta,v,k}|)^{-M}(1+|t-k|)^{-M}.
\end{align*}
Let
\[
 H_a=h^{-2}\sum_{\theta,v,k}|a_{\theta,v,k}|^2W_{\theta,v,k},
 \qquad K_\theta=h^{-2}W_{\theta,0,0}.
\]
Band-limited reproduction and Cauchy--Schwarz give the sample estimate
\[
 |a_{\theta,v,k}|^2
 \lesssim\int |f_\theta(z)|^2W^{\mathrm{fast}}_{\theta,v,k}(z)\,dz,
\]
where the exponent in $W^{\mathrm{fast}}$ is chosen larger than $M+5$. Convolution of decreasing lattice weights yields
\begin{equation}\label{analysisdensity}
 H_a\lesssim\sum_\theta |f_\theta|^2*K_\theta.
\end{equation}
All these statements follow in the affine coordinates from ordinary Fourier series and the estimate
$\sum_{n\in\Z^4}\ip{u-n}^{-L}\ip{n-v}^{-M}\lesssim\ip{u-v}^{-M}$ for $L>M+4$. They retain cancellation within each original sector.

For completeness, the different time origins in \eqref{frameexp} do not require a new propagation hypothesis. On $t\in[j,j+1]$, re-expand the wave originating at $k$ in the frame based at $j$. After removing the linear phase, its transition matrix consists of Fourier coefficients of
\[
 b(\zeta,s)\exp\left(-i(j-k)\frac{|\zeta|^2}{4s}\right).
\]
By integration by parts its entries satisfy, for every $L$,
\[
 |M_{j-k}(u)|\leq C_L(1+|j-k|)^{C_L}(1+|u|)^{-L}.
\]
The constants in this formula are independent of $h$, by the exact identity \eqref{residual}. More explicitly, let
\[
 S_k=h^{-2}\sum_{\theta,v}|a_{\theta,v,k}|^2
           (1+|X_{\theta,v,k}|+|Z_{\theta,v,k}|)^{-M},
 \qquad V_k=\sum_{\theta,v}a_{\theta,v,k}p_{\theta,v}(b,y,t-k).
\]
Then $H_a=\sum_k(1+|t-k|)^{-M}S_k$. In the rebased lattice, weighted Cauchy--Schwarz bounds each new squared coefficient by
\[
 C_L(1+|j-k|)^{C_L}\sum_v
       |a_{\theta,v,k}|^2(1+|u-v|)^{-L}.
\]
The index $u$ is measured after translating the old centre by its ray displacement to time $j$. Fixed lattice offsets do not affect the bound. If a rebased core contains $(b,y,t)$, its distance from an old ray in normalized packet coordinates is bounded by $C(1+|u-v|)$. Summing the last display against rebased cores, with $L>M+4$, therefore bounds their density by $C(1+|j-k|)^C S_k$ on the slab. Applying \eqref{packetdensityeq} to the rebased array gives
\[
 \|V_k\|_{L^3(\R^3\times[j,j+1])}
 \lesssim_\epsilon h^{-\epsilon}(1+|j-k|)^C
                 \|S_k^{1/2}\|_{L^3(\R^3\times[j,j+1])}.
\]
The fixed synthesis profiles are chosen with a plateau on the old profiles, so all constants in the packet estimate are independent of $j,k$; only the displayed transition matrix depends on $j-k$.

Choose $A>1$ and put $q_{j-k}=(1+|j-k|)^{-A}$. Schwartz decay of $\psi$ absorbs the polynomial factor and an additional factor $(1+|j-k|)^{-M/2}$. Minkowski followed by
$(\sum q_k u_k)^3\leq(\sum q_k)^2\sum q_k u_k^3$
thus bounds the cube of the norm on this slab by
\[
 C_\epsilon h^{-3\epsilon}\int_{\R^3\times[j,j+1]}
 \sum_k q_{j-k}\big((1+|t-k|)^{-M}S_k\big)^{3/2}
 \leq C_\epsilon h^{-3\epsilon}\int_{\R^3\times[j,j+1]}H_a^{3/2}.
\]
Summing the disjoint slabs gives
\begin{equation}\label{fullsynthesis}
 \left\|\sum_{\theta,v,k}a_{\theta,v,k}\psi(t-k)
                       p_{\theta,v}(b,y,t-k)\right\|_3
 \lesssim_\epsilon h^{-\epsilon}\|H_a^{1/2}\|_3.
\end{equation}
Finite truncation followed by convergence proves the statement for the Schwartz functions under consideration. Fixed oversampling splits into finitely many translates of the displayed lattices.

\section{Proof of the cone estimate}
By \eqref{fullsynthesis} and \eqref{analysisdensity},
\[
 \left\|\sum_\theta f_\theta\right\|_3
 \lesssim_\epsilon h^{-\epsilon}
 \left\|\sum_\theta |f_\theta|^2*K_\theta\right\|_{3/2}^{1/2}.
\]
The null-coordinate plate kernels $K_\theta$ are fixed linear images of the conic plate kernels in Proposition~\ref{avg}, with comparable dimensions on the compact angular chart. That proposition therefore implies
\[
 \left\|\sum_\theta f_\theta\right\|_3
 \lesssim_\epsilon h^{-C\epsilon}
 \left\|\sum_\theta|f_\theta|^2\right\|_{3/2}^{1/2}.
\]
Renaming the arbitrary loss, returning to $R=h^{-1}$, and summing finitely many angular charts proves Theorem~\ref{main}. Fixed smooth chart multipliers act boundedly on $L^3(\ell^2)$ by their integrable convolution kernels, so this last step introduces no additional square-function assumption.

\section{Elliptic estimates from frequency slices}\label{ellipticsection}
The following consequence also explains why the exponent $3$ is natural.
A transverse slice of the four-dimensional cone is a two-dimensional elliptic
surface in three-dimensional frequency space.

\begin{corollary}\label{ellipticmain}
Assume \eqref{target}. Let $\Sigma$ be a sphere, an ellipsoid, or a compact
patch of an elliptic paraboloid in $\R^3$. Let $\vartheta$ be a regular
finite-overlap cap family for its $R^{-1}$-neighbourhood, with tangential
widths $R^{-1/2},R^{-1/2}$ and normal width $R^{-1}$.
For every $\varepsilon>0$ and Schwartz family with
$\supp\widehat g_\vartheta\subset\vartheta$,
\begin{equation}\label{elliptictarget}
 \left\|\sum_\vartheta g_\vartheta\right\|_{L^3(\R^3)}
 \leq C_{\varepsilon,\Sigma}R^\varepsilon
 \left\|\left(\sum_\vartheta|g_\vartheta|^2\right)^{1/2}\right\|_{L^3(\R^3)}.
\end{equation}
In particular, the external inputs of Theorem~\ref{main} imply this estimate.
\end{corollary}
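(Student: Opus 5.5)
We deduce \eqref{elliptictarget} from \eqref{target} by the standard lifting argument: extend each $g_\vartheta$ to a function on $\R^4$ whose Fourier support lies in a cone sector, apply Theorem~\ref{main}, and then undo the extension. First we reduce to a normalized surface. Any compact patch of an elliptic paraboloid, and any sphere or ellipsoid after splitting into finitely many patches, is the image of a patch of the standard paraboloid $\{\xi_3=|\xi'|^2\}$ under a fixed affine map, up to a smooth change of graph. The caps transform into caps of comparable dimensions, and both sides of \eqref{elliptictarget} change by the same Jacobian factor. A fixed smooth chart partition is bounded on $L^3(\ell^2)$, as in the last step of the proof of Theorem~\ref{main}. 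So it suffices to treat one surface that appears as a slice of $\Gamma$.

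\emph{The slice.} The hyperplane section of $\Gamma$ we use is the null slice $\rho=\rho_0$ fixed in the coordinates \eqref{nullcone}. There the cone becomes the paraboloid $\sigma=-|\eta|^2/(4\rho_0)$, which is affinely equivalent to the standard one. The sphere and ellipsoid arise instead as slices $\tau=\text{const}$, with $\Sigma=\{|\xi|=\tau_0\}$, after a linear map. A frequency sector $\theta$ meets the slab $|\rho-\rho_0|\lesssim h$ (respectively $|\tau-\tau_0|\lesssim h$) in a set comparable to a cap $\vartheta\times[-h,h]$. Each $\vartheta$ determines an angular direction, hence a sector $\theta(\vartheta)$ containing $\vartheta\times\{\rho\in\rho_0+[-ch,ch]\}$ after fixed enlargement. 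Theorem~\ref{main} permits fixed enlargements of the planks.

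\emph{Lifting.} Fix a Schwartz $\phi$ on $\R$ with $\widehat\phi$ supported in $[-c,c]$ and $|\phi|\geq 1/2$ on $[-1,1]$. Define
\[
 F_\vartheta(x,s)=g_\vartheta(x)\,e^{is\rho_0}\phi(hs),
\]
after the identification of coordinates. Then $\widehat F_\vartheta$ is supported in $\vartheta\times(\rho_0+[-ch,ch])$, which lies in a fixed enlargement of $\theta(\vartheta)$. Distinct caps give distinct sectors, with bounded multiplicity. Apply \eqref{target} to the family $F_\vartheta$. Since $|\phi(hs)|$ factors out of both sides pointwise in $s$,
\[
 \left\|\sum_\vartheta F_\vartheta\right\|_{L^3(\R^4)}
 =\|\phi(h\cdot)\|_{L^3}\left\|\sum_\vartheta g_\vartheta\right\|_{L^3(\R^3)},
\]
and likewise the square function norm splits off the same factor $\|\phi(h\cdot)\|_{L^3}$. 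Cancelling it gives \eqref{elliptictarget} with $R=h^{-1}$.

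\emph{Main obstacle.} The key difficulty is verifying that the slices' caps match the sectors of \eqref{sectorintro} at scale $h=R^{-1}$. Caps on a slice have tangential widths $R^{-1/2}$ and normal width $R^{-1}$. A sector has radial length $1$, but its intersection with the slab of width $h$ transverse to the slice has the right shape, because the slice is transverse to the generator direction. I would check this using the frame $g_\omega,E_j,n_\omega$ of Section~\ref{geometry}. For the sphere slices $\tau=\tau_0$, the generator is transverse to the hyperplane. The normal error of the cap, $O(h)$, then becomes the $|\tau-|\xi||\leq C_0h$ condition, and the angular width $\sqrt h$ matches. With this, each lifted support is covered by $O(1)$ sectors. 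A regular finite-overlap cap family may also assign several caps to one sector. If so, I would split the family into $O(1)$ subfamilies, each injecting into sectors, and use the triangle inequality on the left.
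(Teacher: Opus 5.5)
Your proposal is correct and is essentially the paper's proof: the lift $F_\vartheta=g_\vartheta(x)e^{iu_0s}\psi(\delta s)$ with exact cancellation of the common factor $\|\psi(\delta\,\cdot)\|_3$ (Lemma~\ref{tensorlemma}), the sphere and ellipsoid via the slice $\tau=\tau_0$ plus a linear map, the paraboloid via the null slice $\rho=\rho_0$, and bounded multiplicity handled by splitting into injective subfamilies. Delete your opening reduction of the sphere to the paraboloid ``up to a smooth change of graph'': nonlinear changes of frequency variables do not preserve square function estimates (this is the point of the paper's closing remark), so the sphere has to be handled through its own $\tau$-slice, which you in fact go on to do.
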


In this section all frequencies are at unit scale, and $\delta=R^{-1}$. A cone sector has normal thickness $O(\delta)$ and angular width $O(\sqrt\delta)$. An elliptic cap has two tangential widths $O(\sqrt\delta)$ and normal thickness $O(\delta)$.

We first note two harmless changes in the formulation of \eqref{target}. The radial interval $[1,2]$ may be replaced by any fixed compact subinterval of $(0,\infty)$, and a fixed invertible linear change of frequency coordinates is permitted. For the first assertion, use finitely many dilated annular charts. Their smooth cutoffs have uniformly integrable physical kernels and act boundedly on $L^3(\ell^2)$. For the second, apply the dual linear change in physical variables; its determinant contributes the same factor to both sides of the square function estimate. Fixed enlargement and bounded duplication of sectors cost only fixed constants.

\subsection{A tensor-product restriction lemma}

\begin{lemma}\label{tensorlemma}
Let $\{\theta\}$ be a family of frequency regions in $\R^{n+1}$ satisfying
\begin{equation}\label{ambientSF}
 \left\|\sum_\theta F_\theta\right\|_{L^p(\R^{n+1})}
 \leq A(\delta)
 \left\|\left(\sum_\theta|F_\theta|^2\right)^{1/2}\right\|_{L^p(\R^{n+1})},
 \qquad 1\leq p<\infty.
\end{equation}
Suppose regions $\vartheta\subset\R^n$ satisfy
\[
 \vartheta\times[u_0-a\delta,u_0+a\delta]\subset\theta(\vartheta)
\]
for a fixed $a>0$, where the assignment to ambient regions has bounded multiplicity. Then functions with $\supp\widehat g_\vartheta\subset\vartheta$ satisfy
\begin{equation}\label{sliceSF}
 \left\|\sum_\vartheta g_\vartheta\right\|_{L^p(\R^n)}
 \lesssim A(\delta)
 \left\|\left(\sum_\vartheta|g_\vartheta|^2\right)^{1/2}\right\|_{L^p(\R^n)}.
\end{equation}
\end{lemma}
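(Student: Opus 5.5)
The plan is to lift the slice functions to $\R^{n+1}$ by tensoring with one fixed function in the extra variable, apply \eqref{ambientSF}, and then divide out the common factor. Fix a Schwartz function $\phi$ on $\R$ with $\supp\widehat\phi\subset[-a,a]$ and $\phi\not\equiv0$; such $\phi$ exist by taking $\widehat\phi$ a smooth bump. Put
\[
 \phi_\delta(s)=e^{isu_0}\phi(\delta s),
\]
so that $\widehat{\phi_\delta}$ is supported in $[u_0-a\delta,u_0+a\delta]$. Define
\[
 F_\vartheta(x,s)=g_\vartheta(x)\phi_\delta(s).
\]
Its Fourier transform is $\widehat g_\vartheta\otimes\widehat{\phi_\delta}$, supported in $\vartheta\times[u_0-a\delta,u_0+a\delta]\subset\theta(\vartheta)$.

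Group the $F_\vartheta$ by ambient region: set $F_\theta=\sum_{\theta(\vartheta)=\theta}F_\vartheta$. Each $F_\theta$ has frequency support in $\theta$, so \eqref{ambientSF} applies. By bounded multiplicity and Cauchy--Schwarz,
\[
 |F_\theta|^2\le C\sum_{\theta(\vartheta)=\theta}|F_\vartheta|^2,
\]
and therefore
\[
 \sum_\theta|F_\theta|^2\le C\,|\phi_\delta(s)|^2\sum_\vartheta|g_\vartheta(x)|^2 .
\]
If the assignment is many-to-one in the other direction, so that one $\vartheta$ lies in several regions, we simply assign it to one of them; the inclusion hypothesis is all that is needed.

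Since $\sum_\theta F_\theta=\phi_\delta(s)\sum_\vartheta g_\vartheta(x)$, both sides factor by Fubini. The left side of \eqref{ambientSF} equals
\[
 \|\phi_\delta\|_{L^p(\R)}\Bigl\|\sum_\vartheta g_\vartheta\Bigr\|_{L^p(\R^n)},
\]
and the right side is at most
\[
 C A(\delta)\|\phi_\delta\|_{L^p(\R)}\Bigl\|\Bigl(\sum_\vartheta|g_\vartheta|^2\Bigr)^{1/2}\Bigr\|_{L^p(\R^n)}.
\]
Dividing by the finite nonzero number $\|\phi_\delta\|_p=\delta^{-1/p}\|\phi\|_p$ gives \eqref{sliceSF}.

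The only delicate point is that the $F_\theta$ are Schwartz (or at least in the class for which \eqref{ambientSF} is asserted) and that all norms are finite. Schwartz $g_\vartheta$ tensored with Schwartz $\phi_\delta$ are Schwartz, so this holds. If \eqref{ambientSF} is stated only for finite families, we first truncate to finitely many $\vartheta$ and pass to the limit by monotone convergence on the right and Fatou on the left. I expect no genuine obstacle here. The one thing to check carefully is that the multiplicity constant enters only through the pointwise Cauchy--Schwarz step, so that the implied constant in \eqref{sliceSF} depends only on the multiplicity bound and not on $\delta$, $a$, or the choice of $\phi$ after normalization.
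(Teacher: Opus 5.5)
Your proof is correct and is essentially the paper's. You use the same lift $F_\vartheta(x,s)=g_\vartheta(x)e^{iu_0s}\psi(\delta s)$, apply Fubini, and cancel the common factor $\delta^{-1/p}\|\psi\|_p$. The only difference is how bounded multiplicity $N$ is handled: you group the $\vartheta$ sharing an ambient region and apply pointwise Cauchy--Schwarz on the square-function side, giving constant $\sqrt N$, whereas the paper splits the indices into boundedly many injective subfamilies and uses the triangle inequality, giving constant $N$. Either suffices.
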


\begin{proof}
Choose a nonzero Schwartz function $\psi$ with $\supp\widehat\psi\subset[-a,a]$ and define
\[
 F_\vartheta(x,s)=g_\vartheta(x)e^{iu_0s}\psi(\delta s).
\]
Its Fourier transform is
\[
 \widehat F_\vartheta(\xi,u)
 =\delta^{-1}\widehat g_\vartheta(\xi)
                    \widehat\psi\big((u-u_0)/\delta\big),
\]
so it has the required ambient support. First assume the assignment is injective and apply \eqref{ambientSF}. Fubini gives the exact identities
\begin{align*}
 \left\|\sum_\vartheta F_\vartheta\right\|_p
 &=\delta^{-1/p}\|\psi\|_p\left\|\sum_\vartheta g_\vartheta\right\|_p,\\
 \left\|\left(\sum_\vartheta|F_\vartheta|^2\right)^{1/2}\right\|_p
 &=\delta^{-1/p}\|\psi\|_p
       \left\|\left(\sum_\vartheta|g_\vartheta|^2\right)^{1/2}\right\|_p.
\end{align*}
Cancel the common nonzero factor. For bounded multiplicity, split the indices into a bounded number of injective families and use the triangle inequality. This proves \eqref{sliceSF}. In particular, the passage to a slice introduces no power of $\delta$.
\end{proof}

\subsection{The sphere}

It suffices by dilation to consider the sphere $|\xi|=\lambda_0$, where $1<\lambda_0<2$ is fixed. Its $\delta$-neighbourhood is covered by caps of angular width $\sqrt\delta$. If
\[
 \big||\xi|-\lambda_0\big|\leq C\delta,
 \qquad |\tau-\lambda_0|\leq a\delta,
\]
then
\[
 |\tau-|\xi||\leq(C+a)\delta.
\]
The angular coordinate $\xi/|\xi|$ is unchanged by the extra frequency $\tau$. Each lifted elliptic cap is therefore contained in a fixed enlargement of a canonical cone sector, with bounded multiplicity. Apply Lemma~\ref{tensorlemma} with $p=3$, $u_0=\lambda_0$, and $A(\delta)=C_\varepsilon\delta^{-\varepsilon}$. This proves \eqref{elliptictarget} for the sphere. The required lift is explicitly
\[
 F_\vartheta(x,t)=g_\vartheta(x)e^{i\lambda_0t}\psi(t/R).
\]
Both norms acquire the factor $R^{1/3}\|\psi\|_3$, which cancels exactly.

\subsection{The paraboloid}

Use the null-frequency form of the cone
\begin{equation}\label{nullconesection}
 \mathcal C_0=\{(\eta,\rho,\sigma):\rho>0,\quad
                         \sigma=-|\eta|^2/(4\rho)\}.
\end{equation}
This is a linear image of the light cone. Indeed, if $u=\rho-\sigma$ and $v=\rho+\sigma$, then
\[
 u^2-v^2=-4\rho\sigma=|\eta|^2,
\]
and $u>0$ on the indicated branch. The slice $\rho=1$ is the elliptic paraboloid
\[
 \Pi=\{(\eta,\sigma):\sigma=-|\eta|^2/4\}\subset\R^3.
\]
Restrict $\eta$ to a fixed compact set. Let $\vartheta$ have centre $\eta_\vartheta$ and support conditions
\[
 |\eta-\eta_\vartheta|\leq C\sqrt\delta,
 \qquad \left|\sigma+\frac{|\eta|^2}{4}\right|\leq C\delta.
\]
For $|\rho-1|\leq a\delta$ and sufficiently small $\delta$,
\begin{equation}\label{parabolaliftsupport}
 \left|\sigma+\frac{|\eta|^2}{4\rho}\right|
 \leq \left|\sigma+\frac{|\eta|^2}{4}\right|
       +\frac{|\eta|^2}{4}|\rho^{-1}-1|
 \lesssim\delta.
\end{equation}
Moreover the angular coordinate of \eqref{nullconesection} satisfies
\[
 \left|\frac{\eta}{2\rho}-\frac{\eta_\vartheta}{2}\right|
 \lesssim\sqrt\delta.
\]
Thus the product of an elliptic cap with this $\rho$ interval lies in a fixed enlarged cone sector. The radial variable stays in a fixed compact positive interval. These observations verify all the support hypotheses of Lemma~\ref{tensorlemma}.

In the physical convention used earlier, the variables $(b,y,t)$ are paired with $(\eta,-\rho,\sigma)$. Accordingly, for $g_\vartheta(b,t)$ the lift is
\begin{equation}\label{parabolalift}
 F_\vartheta(b,y,t)=g_\vartheta(b,t)e^{-iy}\psi(y/R).
\end{equation}
Applying the cone estimate and cancelling $R^{1/3}\|\psi\|_3$ proves the elliptic square function estimate on $\Pi$. Bounded $R$ is absorbed by the trivial finite-cap estimate.

Every positive-definite quadratic graph
\[
 \sigma=c+\ell\cdot\eta+\tfrac12\eta^{\mathsf T}H\eta,
 \qquad H=H^{\mathsf T}>0,
\]
is a fixed invertible affine image of $\Pi$, allowing reflection of the last coordinate. The dual physical change has the same Jacobian factor on both sides; a frequency translation is a common modulation of modulus one. Regular cap dimensions are preserved up to fixed constants. This proves the paraboloid assertion and the affine-invariance assertion in Corollary~\ref{ellipticmain}.

\begin{remark}
The slicing proof preserves the exponent and the arbitrary positive power
loss; it does not give the estimate with $\varepsilon=0$. A general elliptic
surface is not an affine slice of this quadratic cone. For such a surface,
the same tensor-product argument requires the square function estimate for
its own homogeneous cone. That additional estimate is not asserted here.
\end{remark}

\end{document}